\documentclass[10pt]{article}
\usepackage{geometry} 
\usepackage{amsmath, amssymb, amsthm} 
\usepackage{mathtools} 
\usepackage{mathrsfs}  
\usepackage{authblk}   
\usepackage{comment}   
\usepackage{xcolor}   
\usepackage{titling}  
\usepackage[hidelinks]{hyperref} 
\hypersetup{colorlinks=true,
	linkcolor=blue,
	urlcolor=blue,
	anchorcolor=blue,
	citecolor=blue}
\pretitle{\vspace{-0.5em}\begin{center}\LARGE}  
	\posttitle{\end{center}\vspace{-0.5em}}    	
\preauthor{\vspace{0em}\begin{center}\large} 
	\postauthor{\end{center}\vspace{-0.5em}}    
\predate{\vspace{0em}\begin{center}}         
	\postdate{\end{center}\vspace{0.5em}}         
    
\numberwithin{equation}{section} 
\allowdisplaybreaks[4] 

\theoremstyle{plain}            
\newtheorem{thm}{Theorem}[section]
\newtheorem{lem}[thm]{Lemma}
\newtheorem{prop}[thm]{Proposition}

\theoremstyle{definition}       
\newtheorem{defn}[thm]{Definition}

\theoremstyle{remark}           

\newcommand{\qedd}{\hfill $\square$} 
\newcommand{\step}[1]{%
	\par\noindent 
	\textit{Step}~#1.\enspace 
}

\newcounter{hyp}[section]
\renewcommand{\thehyp}{\textbf{(H\arabic{hyp})}}
\newcommand{\hyp}[2][]{%
	\refstepcounter{hyp}%
	\noindent\thehyp\ifx\\#1\\\else\ (#1)\fi\, #2%
}
\newcounter{as}[section]
\renewcommand{\theas}{\textbf{(A\arabic{as})}}
\newcommand{\as}[2][]{%
	\refstepcounter{as}%
	\noindent\theas\ifx\\#1\\\else\ (#1)\fi\, #2%
}
\newcommand{\hypref}[1]{\begingroup\hypersetup{linkcolor=black}\textbf{\hyperref[#1]{\ref*{#1}}}\endgroup}
\newcommand{\asref}[1]{\begingroup\hypersetup{linkcolor=black}\textbf{\hyperref[#1]{\ref*{#1}}}\endgroup}

\newcommand{\R}{\mathbb{R}} 
\newcommand{\N}{\mathbb{N}} 
\def\P{\mathbb{P}} 			
\newcommand{\E}{\mathbb{E}} 

\newcommand{\var}{\operatorname{Var}}        

\title{Reflected stochastic partial differential equations with fully local monotone coefficients in infinite dimensional domains}
\author{Qi Li$^{1}$, Yue Li$^{2}$, Tusheng Zhang$^{1,3}$}
\date{\small\today}
\begin{document}
	\maketitle
    \addtolength{\skip\footins}{-1mm} 
    {\renewcommand{\thefootnote}{1}  
		\footnotetext{               
			School of Mathematical Sciences, University of Science and Technology of China, Hefei, Anhui, China.
			\textit{Email:}\texttt{\href{mailto:vivien777@mail.ustc.edu.cn}{vivien777@mail.ustc.edu.cn}}
		}
    }
    {\renewcommand{\thefootnote}{2}  
    	\footnotetext{               
    		School of Mathematics and Statistics, Nanjing University of Science and Technology, Nanjing, Jiangsu, China.
    		\textit{Email:} \texttt{\href{mailto:yueli@njust.edu.cn}{yueli@njust.edu.cn} 
            \textnormal{(Corresponding author)}
    	}
    }
    }
    {\renewcommand{\thefootnote}{3}  
		\footnotetext{               
    		Department of Mathematics, University of Manchester, Oxford road, Manchester M13 9PL, United Kingdom
            \textit{Email:}\texttt{\href{mailto:tusheng.zhang@manchester.ac.uk}{tusheng.zhang@manchester.ac.uk}}   
		}
	}
	\vspace{-1cm}
    \begin{abstract}

        This paper establishes the well-posedness of stochastic partial differential equations with reflection in an infinite-dimensional ball, within the fully local monotone framework. Our result is very general, including many important models such as the stochastic Allen-Cahn equations, stochastic $p$-Laplacian equations and stochastic 3D tamed Navier-Stokes equations, as well as more complex systems like the stochastic Cahn-Hilliard equations and stochastic 2D liquid crystal models. The approach relies on the penalization method, pseudo-monotonicity techniques and Mazur’s lemma.
    	 
        \vskip0.3cm
    	\noindent{\bfseries Keywords:}{Stochatic partial differential equations with reflection, fully local monotone, random measure,  weak topology, stochastic 3D tamed  Navier-Stokes equations}\\
    	\noindent{\bfseries MSC 2020:} 60H15; 60J55; 35R60.
        \end{abstract}

 \section{Introduction}
Let $ H $ be a separable Hilbert space with inner product $(\cdot,\cdot)$ and norm $ |\cdot|_{H} $. Let $ V $ be a reflexive Banach space that is continuously and densely embedded into $ H $. The norms of $ V $ and its dual space $ V^{*} $ are denoted by $ \|\cdot\|_{V} $ and $ \|\cdot\|_{V^{*}} $ respectively. If we identify the Hilbert space $ H $ with its dual space $ H^{*} $ by the Riesz representation, then we obtain a Gelfand triple
\[
V\subseteq H\subseteq V^{*}.
\]
We denote by $ \langle f,v\rangle $ the dual pairing between $ f\in V^{*} $ and $ v\in V $. It is easy to see that
\[
(u,v)=\langle u,v\rangle,\quad\forall\,u\in H,v\in V. 
\]
Let $ W $ be a cylindrical Wiener process on another separable Hilbert space $ U $ defined on some probability space $ (\Omega,\mathcal{F},\mathbb{P}) $ with normal filtration $ \mathbb{F} $.

Let $T>0 $ be fixed in this paper. Consider the following stochastic partial differential equations (SPDEs) with reflection:
\begin{equation}\label{SEE}
	\left\{
	\begin{aligned}
		dX(t)&=A(t,X(t))dt +B(t,X(t))dW(t) +dL(t), t\in(0,T],\\
		X(0)& =X_0, \quad X_0\in \overline{D},
	\end{aligned}
	\right.
\end{equation}
where $X$ is a $\overline{D}$-valued continuous stochastic process and $L$ is a local time, which is an $H$-valued stochastic process with locally bounded variation. Here $D:=B(0,1)$ denotes the open unit ball in $H$ and $\overline{D}$ its closure. 
The mappings 
\[
A:[0,T]\times V\to V^{*},\quad B:[0,T]\times V\to L_{2}(U,H)
\]
are progressively measurable, where $ L_{2}(U,H) $ is the space of Hilbert-Schmidt operators from $ U $ to $ H $ with the norm denoted by $ \|\cdot\|_{L_{2}} $.

The following is the definition of solutions to Problem \eqref{SEE}.
\begin{defn}\label{solutiondef}
	A pair $(X,L)$ is said to be a solution of the reflected problem $(\ref{SEE})$ iff the following conditions are satisfied:
	\begin{itemize}
		\item[(i)] $X$ is a $\overline{D}$-valued continuous and $\mathbb{F}$-progressively measurable stochastic process with $X\in L^\alpha ([0,T];V)$, $\mathbb{P}$-a.s. for some $\alpha \in (1,\infty)$.  The corresponding $V$-valued process is strongly $\mathbb{F}$-progressively measurable;
		\item[(ii)] $L$ is an $H$-valued, $\mathbb{F}$-progressively measurable stochastic process of locally bounded  variation such that $L(0)=0$ and
		\begin{equation*}
			\mathbb{E}[|\text{Var}_H(L)([0,T])|^2]<+ \infty,
		\end{equation*}
		where, for a function $v:[0,\infty)\rightarrow H$, Var$_H(v)([0,T])$ is the total variation of $v$ on $[0,T]$ defined by
		\begin{equation*}
			\text{Var}_H(v)([0,T]):= \sup\sum_{i=1}^n|v(t_i)-v(t_{i-1})|_H,
		\end{equation*}
		where the supremum is taken over all partitions $0 = t_0 < t_1 <\cdots < t_{n-1} < t_n = T$, $n\in \mathbb{N}$, of the interval $[0,T]$;
		\item[(iii)] $(X,L)$ satisfies the following integral equation in $V^{\ast}$, for every $t\in [0,T]$,
		\begin{equation}\nonumber
			X(t)=X_0+\int_0^t A(s,X(s))ds +\int_0^tB(s,X(s)))dW(s) +L(t),  \quad\mathbb{P}\text{-a.s.}
		\end{equation}
		\item[(iv)] for every $T>0$, for every $\phi \in C([0,T],\overline{D})$,
		\begin{equation} \label{def5}
			\int_0^T (\phi(t)-X(t),L(dt))\geq 0,\quad \mathbb{P}\text{-a.s.}
		\end{equation}
		where the integral on LHS is the Riemann-Stieltjes integral of the $H$-valued function $\phi-X$ with respect to an $H$-valued bounded-variation function $L$.
	\end{itemize}
\end{defn}

The analysis of random field dynamics constrained to stay in a prescribed domain naturally leads to stochastic partial differential equations with reflection. Such equations arise crucially in modeling physical systems with hard boundaries, such as evolving interfaces near walls or confined stochastic fluids. As an example, T. Funaki and S. Olla  \cite{FO01} proved that the fluctuations of a $\nabla \phi$ interface model near a hard wall converge in law to the stationary solution of a SPDE with reflection. 

\vspace{1em}
The study of reflected SPDEs driven by space-time white noise was initiated by  Nualart and Pardoux \cite{NP92} for additive noise case, extended to general  diffusion coefficient $\sigma$ but without uniqueness by Donati-Martin and Pardoux \cite{DP93}, completed with uniqueness for  general $\sigma$ by T. Xu and the third author  \cite{XZ09}. See \cite{DZ07} for reflected stochastic Cahn-Hilliard equations. Various properties of the solution of the real-valued SPDEs with reflection were studied in \cite{DMZ06,DP97,HP89,Za01,Z10}. 

\vspace{1em}
While reflection problems are well studied in finite dimensions, their extension to infinite-dimensional domains requires new analytical frameworks. The reflection problem for 2D stochastic Navier-Stokes equations with periodical boundary conditions in an infinite-dimensional ball was studied by  V. Barbu, G. Da Prato and L. Tubaro \cite{BDT11}, using stochastic variational inequality, Galerkin approximations and the Kolmogorov equations. They then studied the reflected problem for stochastic evolution equations driven by additive noise on a closed convex subset in a Hilbert space in \cite{BDT12}. Recently Brze\'{z}niak and the third author \cite{BZ23} studied the case of multiplicative noise for stochastic evolution equations via a direct stochastic penalization, whose framework successfully encompasses important models like the 2D stochastic Navier-Stokes equations. For its large deviation and ergodic properties, see \cite{BLZ24} and \cite{BLZ}.  Inspired by \cite{BZ23} but aiming for a more general setup, we investigate the reflection problem for SPDEs within the recently developed setting of fully local monotone framework in \cite{RSZ24}.

\vspace{1em}
The theory of monotone operators, initiated by Minty \cite{M62} and further developed by Browder, Leray, Lions, and others (see \cite{Li69,Ze90}), provides a powerful framework for analyzing nonlinear partial differential equations. For SPDEs, the variational approach was pioneered by Pardoux \cite{P72,P75}, and then Krylov and Rozovskii\cite{KR79} and Gy\"{o}ngy \cite{G82}. This theory was substantially extended by Liu and R\"{o}ckner \cite{LR10}, who introduced the classical local monotonicity framework. More recently, R\"{o}ckner, Shang and the third author established the well-posedness theory for SPDEs with fully local monotone coefficients \cite{RSZ24}, achieving a framework of remarkable generality.

\vspace{1em}
The aim of this paper is to establish the well-posedness for SPDEs with reflection in an infinite-dimensional ball, within the fully local monotone framework. Our approach builds upon the penalization method as in \cite{BZ23}.  In our framework, however, the operator $A$ lacks continuity, and the approximate solutions $X^n$ do not converge strongly in $L^\alpha(0,T;V^*)$ in probability. As a consequence, the penalty term $L^n$ fails to converge in the strong norm of $C([0,T];V^*)$ in probability, in contrast to the results in \cite{BZ23}. To overcome this difficulty, using the fact that the Bochner integral operator $I:L^1([0,T]\times \Omega;V^*)\to L^1(\Omega;C([0,T];V^*))$ defined by
$$(If)(t,\omega):=\int_0^t f(s,\omega)ds,\quad f\in L^1([0,T]\times \Omega;V^*),$$
is bounded and linear, hence weakly continuous, we pass the weak convergence of $A(\cdot,X^n)$  to $\int_0^\cdot A(s,X^n(s))ds$. Consequently, $L^n$ converge weakly in $L^1(\Omega;C([0,T];V^*))$. Then we apply Mazur’s lemma to extract convex combinations of $L^n$ that converge almost surely in $C([0,T];V^*)$, recovering the necessary strong convergence.
A second issue arises in identifying limit of $A(\cdot,X^n)$ via the monotonicity method, as the standard technique of comparing $\E|X^n(t)|_H^2$ and $\E|X(t)|_H^2$ requires $X$ to be an It\^{o} process, which is unknown in advance at in our case. We avoid this by using the  Cauchy  property of  $\{X^n\}$ in $L^2(\Omega;C([0,T];H))$. Together, these arguments allow us to extend the penalization method to a broader class of monotone operators without continuity and under weaker convergence assumptions.

\vspace{1em}
Our work under fully local monotone framework provides a general approach for studying reflected SPDEs covering not only the models in \cite{BDT12,BZ23}, but also a broader class, including stochastic 2D Navier-Stokes equations, porous media equations, reaction-diffusion equations, fast-diffusion equations, $p$-Laplacian equations, Burgers equations, Allen-Cahn equations, 3D Leray-$\alpha$ model, 2D Boussinesq system, 2D magneto-hydrodynamic equations, 2D Boussinesq model for the B\'{e}nard convection, 2D magnetic B\'{e}nard equations, some shell models of turbulence (GOY, Sabra, dyadic), power law fluids, the Ladyzhenskaya model, the Kuramoto-Sivashinsky equations and the 3D tamed Navier-Stokes equations, 3D tamed Navier-Stokes equations, some quasilinear PDEs, Cahn-Hilliard equations, liquid crystal models and Allen-Cahn-Navier-Stokes systems, see e.g. \cite{LR10,LR15,RSZ24}.

\vspace{1em}
The paper is organized as follows: Section 2 introduces the  framework and the fully local monotonicity assumptions. Section 3 is devoted to the analysis of the penalized equations and the derivation of crucial a priori estimates. Section 4 contains our main result and proofs, establishing the existence and uniqueness of solutions. Section 5 presents the applications of our result to some models.

\vspace{1em}
Conventions on constants. Throughout the paper, $C$ denotes a generic positive constant whose value may change from line to line. The dependence of constants on parameters if needed will be indicated, e.g. $C(T)$.

\section{Framework}

We introduce the following conditions on the coefficients $A$ and $B$. Let $C_0>0$ and $\alpha\in(1,\infty)$.

\hyp{(Hemicontinuity) for a.e. $t\in[0,T]$, the mapping $\mathbb{R}\ni\lambda\longmapsto\langle A(t,u+\lambda v),x\rangle\in\mathbb{R}$ is continuous for any $u,v,x\in V$.
}\label{H1}

\hyp{(Local monotonicity) there exist nonnegative constants $\gamma$ and $C$ such that for a.e. $t\in[0,T]$ and any $u,v\in V$,
	\begin{align*}
		2\langle A(t,u)-A(t,v),u-v\rangle + \|B(t,u)-B(t,v)\|_{L_{2}}^{2} \leq \left[C_0+\rho(u)+\eta(v)\right]|u-v|_{H}^{2},
	\end{align*}
	where $\rho$ and $\eta$ are two measurable functions from $V$ to $\mathbb{R}$ satisfying
	$$
	|\rho(u)|+|\eta(u)| \leq C\left(1+\|u\|_{V}^{\alpha}\right)\left(1+|u|_{H}^{\gamma}\right),
	$$}\label{H2}
\!\hyp{(Coercivity) there exists a constant $c>0$ such that for a.e. $t\in[0,T]$ and any $u\in V$,
	\[
	2\langle A(t,u),u\rangle + \|B(t,u)\|_{L_{2}}^{2} \leq C_0\left(1+|u|_{H}^{2}\right) - c\|u\|_{V}^{\alpha}.
	\]     
} \label{H3}
\!\hyp{(Growth) there exist nonnegative constants $\beta$ and $C$ such that for a.e. $t\in[0,T]$ and any $u\in V$,
	\[
	\|A(t,u)\|^{\frac{\alpha}{\alpha-1}}_{V^{*}} \leq C\left(1+\|u\|_{V}^{\alpha}\right)\left(1+|u|_{H}^{\beta}\right).
	\]
}\label{H4}
\!\hyp{For a.e. $t\in[0,T]$ and $u,v$ in $V$,
	\begin{align}
		\|B(t,u)-B(t,v)\|_{L_{2}}^2  &\leq C_0|u-v|_{H}^{2}.\label{lip}\\
		\|B(t,u)\|_{L_{2}}^{2} &\leq C_0\left(1+|u|_{H}^{2}\right).
	\end{align}
} \label{H5}	
\vspace{-2em}

\section{The existence and the uniqueness of solutions to an approximated problem}

Define the projection mapping $\pi: H \to \overline{D}$ onto the closed unit ball $\overline{D} = \{ x \in H : |x|_H \leq 1 \}$ by, for $y \in H$,
\begin{equation*}
\pi(y) =
\begin{cases}
	y, & \text{if } |y|_H \leq 1, \\
	\frac{y}{|y|_H}, & \text{if } |y|_H > 1.
\end{cases}
\end{equation*}  
Observe that $I-\pi$ can be seen as the gradient of the function $\phi$ defined by:
\begin{equation*}
\phi(y) = \frac{1}{2} |\operatorname{dist}(y,D)|^{2} =
\begin{cases}
	0, & \text{if } |y|_H \leq 1, \\
	\dfrac{1}{2}(|y|_H-1)^{2}, & \text{if } |y|_H > 1.
\end{cases}
\end{equation*}
In other words,
\begin{equation*}
\nabla \phi(y)=y-\pi(y), \quad y \in H.
\end{equation*}

The following lemma states some straightforward properties of the projection $\pi$ that will be used later.
\begin{lem}\label{lempi}
The mapping $\pi$ has the following properties.
\begin{enumerate}
	\item[(i)](Lipschitz continuity) 
	\begin{equation}\label{pi1}
		|\pi(x) - \pi(y)|_H \leq 2|x - y|_H, \quad x, y \in H.
	\end{equation}    
	\item[(ii)] For any $x \in H$,
	\begin{equation}
		\begin{aligned}\label{pi2}
			(\pi(x), x - \pi(x)) &= |x - \pi(x)|_H, \\
			(x, x - \pi(x)) &= |x|_H |x - \pi(x)|_H. 
		\end{aligned}
	\end{equation}
	
	\item[(iii)] (Variational inequality) for any $x \in H$ and $y \in \overline{D}$, 
	\begin{equation}\label{pi3}
		(x - y, x - \pi(x)) \geq 0.
	\end{equation}
\end{enumerate}
\end{lem}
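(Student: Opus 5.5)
All three properties follow by direct computation after splitting into the cases $|x|_H\le 1$ and $|x|_H>1$, so the plan is case-by-case.

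For (ii), if $|x|_H\le 1$ then $\pi(x)=x$, and both identities read $0=0$. If $|x|_H>1$, then $x-\pi(x)=(1-|x|_H^{-1})x$, so $|x-\pi(x)|_H=|x|_H-1$. We then compute
\[
(\pi(x),x-\pi(x))=\frac{1}{|x|_H}\Bigl(1-\frac{1}{|x|_H}\Bigr)|x|_H^2=|x|_H-1,
\]
and similarly $(x,x-\pi(x))=(1-|x|_H^{-1})|x|_H^2=|x|_H(|x|_H-1)$. These are the two identities in \eqref{pi2}.

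For (iii), again the case $|x|_H\le1$ is trivial, since $x-\pi(x)=0$. For $|x|_H>1$, we write $(x-y,x-\pi(x))=(x,x-\pi(x))-(y,x-\pi(x))$. By (ii) the first term equals $|x|_H|x-\pi(x)|_H$. By Cauchy--Schwarz and $|y|_H\le1$, the second term is at most $|x-\pi(x)|_H$. Hence
\[
(x-y,x-\pi(x))\ge(|x|_H-1)|x-\pi(x)|_H\ge0.
\]
Alternatively, (iii) is the standard characterization of the metric projection onto a closed convex set.

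For (i), the quickest route is the classical fact that the projection onto a closed convex set is $1$-Lipschitz. This gives $|\pi(x)-\pi(y)|_H\le|x-y|_H\le2|x-y|_H$. The standard proof goes through the variational inequality: apply \eqref{pi3}-type inequalities to the pairs $(x,\pi(y))$ and $(y,\pi(x))$, add them, and use Cauchy--Schwarz.

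If I instead want to avoid invoking the general fact, I would prove the constant $2$ directly by cases.
\begin{itemize}
\item If both points lie in $\overline D$, the bound is trivial.
\item If $|x|_H>1\ge|y|_H$, then $|\pi(x)-\pi(y)|_H\le|\pi(x)-x|_H+|x-y|_H$. Moreover $|\pi(x)-x|_H=|x|_H-1\le|x|_H-|y|_H\le|x-y|_H$.
\item If both have norm $>1$, write
\[
\frac{x}{|x|_H}-\frac{y}{|y|_H}=\frac{x-y}{|x|_H}+y\Bigl(\frac{1}{|x|_H}-\frac{1}{|y|_H}\Bigr),
\]
which has norm at most $\frac{|x-y|_H+\bigl||y|_H-|x|_H\bigr|}{|x|_H}\le2|x-y|_H$.
\end{itemize}
This case split is the only mildly delicate step, and it is elementary.
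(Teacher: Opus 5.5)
The paper gives no proof of this lemma; it is stated as a list of ``straightforward properties'' and the Appendix proves only Lemmas 3.4--3.8. There is therefore no paper argument to compare against, and your case-by-case verification is correct. The computations for (ii) are right. Deducing (iii) from (ii) and Cauchy--Schwarz is right. Your direct three-case proof of (i) with constant $2$ is complete and self-contained; the case $|y|_H>1\ge|x|_H$ follows by symmetry.

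One caveat concerns your side remarks on the ``classical'' route to (i). The inequality \eqref{pi3} as stated is \emph{not} the standard characterization of the metric projection. Since $(x-y,x-\pi(x))=|x-\pi(x)|_H^2+(\pi(x)-y,x-\pi(x))$, it is strictly weaker than the obtuse-angle inequality $(x-\pi(x),\pi(x)-z)\ge 0$ for $z\in\overline D$. It also does not determine $\pi(x)$: it holds trivially if $\pi(x)$ is replaced by $x$.

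Consequently, applying \eqref{pi3} literally to the pairs $(x,\pi(y))$ and $(y,\pi(x))$ and adding only gives $|a|_H^2+|b|_H^2+(\pi(x)-\pi(y),a-b)\ge 0$, where $a=x-\pi(x)$ and $b=y-\pi(y)$. That does not yield nonexpansiveness, because of the extra $|a|_H^2+|b|_H^2$. The sketch works if you use the obtuse-angle inequality instead. For the unit ball this is a one-line check: for $|x|_H>1$ and $|z|_H\le 1$, $(x-\pi(x),\pi(x)-z)=(1-|x|_H^{-1})(|x|_H-(x,z))\ge(1-|x|_H^{-1})|x|_H(1-|z|_H)\ge 0$.

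Your direct case argument already proves (i) with the stated constant, so this does not affect the validity of the proposal.
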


\vspace{1em}
For every $n \in \mathbb{N}$, we consider the following penalized SPDE:
\begin{equation}\label{pSPDE}
X^{n}(t) = X_{0} +\int_{0}^{t} A(t,X^{n}(s)) \, ds + \int_{0}^{t} B(s,X^{n}(s)) \, dW(s) - n \int_{0}^{t} \left(X^{n}(s) - \pi(X^{n}(s))\right) \, ds. 
\end{equation}

The well-poesedness of the penalized equation \eqref{pSPDE} follows from \cite{RSZ24}.
\begin{thm}\label{thm3.2}
Suppose that the embedding $V\subseteq H$ is compact and \hypref{H1}-\hypref{H5} hold. Then for any initial value $X_0\in L^p(\Omega,\mathcal{F}_0,\P;H)$ for some $p
\geq 2$, there exists a probabilistically strong solution to equation \eqref{pSPDE}. Furthermore, the following moment estimate holds:
\begin{equation}
	\mathbb{E}\left\{\sup_{t\in[0,T]}|X^n(t)|_{H}^{p}\right\}+\mathbb{E}\left\{\left(\int_{0}^{T}\|X^n(s)\|^{\alpha}_{V}\,ds\right)^{\frac{p}{2}}\right\}<C(n,p,T). 
\end{equation}
\end{thm}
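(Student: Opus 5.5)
The plan is to check that the penalized coefficients
\[
A_n(t,u):=A(t,u)-n\bigl(u-\pi(u)\bigr),\qquad B_n(t,u):=B(t,u)
\]
satisfy the fully local monotone hypotheses of \cite{RSZ24}, with the same $\alpha$ and possibly larger constants depending on $n$. The existence of a probabilistically strong solution and the moment bound then follow directly from the main theorem of \cite{RSZ24}. That theorem needs a compact embedding $V\subseteq H$, hemicontinuity, local monotonicity of the form in \hypref{H2}, coercivity, growth, and the growth condition on $B$. These are exactly \hypref{H1}--\hypref{H5}, which we assume. So the whole proof reduces to checking that adding the penalty term $-n(u-\pi(u))$ does not destroy any of them.

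\textbf{Hemicontinuity.} By \eqref{pi1}, $u\mapsto u-\pi(u)$ is Lipschitz from $H$ to $H$. Hence $\lambda\mapsto (u+\lambda v-\pi(u+\lambda v),x)$ is continuous, and \hypref{H1} carries over to $A_n$.

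\textbf{Local monotonicity.} Since $\nabla\phi=I-\pi$ with $\phi$ convex, $I-\pi$ is a monotone operator on $H$. Therefore
\[
-2n\bigl(u-\pi(u)-v+\pi(v),\,u-v\bigr)\le 0.
\]
Even without invoking convexity, \eqref{pi1} bounds this term by $6n|u-v|_H^2$. Either way, \hypref{H2} holds for $A_n$ with the same $\rho,\eta$ and $C_0$ replaced by at most $C_0+6n$.

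\textbf{Coercivity.} By \eqref{pi2}, $(u,u-\pi(u))=|u|_H|u-\pi(u)|_H\ge 0$, so $-2n(u-\pi(u),u)\le 0$ and \hypref{H3} is unchanged.

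\textbf{Growth.} This is the one point needing a small computation. We have $|u-\pi(u)|_H\le |u|_H$, and $\|h\|_{V^*}\le C|h|_H$ by the continuous embedding. Writing $q=\alpha/(\alpha-1)$, we get
\[
\|A_n(t,u)\|_{V^*}^{q}\le C\|A(t,u)\|_{V^*}^{q}+Cn^{q}|u|_H^{q}.
\]
To fit this into the form required by \hypref{H4} we keep $\alpha$ and enlarge $\beta$ to $\max(\beta,q)$, so that $|u|_H^{q}\le (1+\|u\|_V^\alpha)(1+|u|_H^{\max(\beta,q)})$. This is harmless, because the theorem in \cite{RSZ24} allows an arbitrary $\beta\ge 0$ in the growth condition.

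\textbf{Conclusion.} Once the hypotheses are verified, \cite{RSZ24} gives, for $X_0\in L^p(\Omega,\mathcal F_0,\P;H)$, a unique probabilistically strong solution together with the a priori bound stated in the theorem. All constants depend on $n$ only through $C_0+6n$ and the enlarged growth constant, which is why the bound is written $C(n,p,T)$.

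\textbf{Main obstacle.} The only step I expect to need care is matching the precise form of the moment estimate. In particular, \cite{RSZ24} may state its bound with the exponent $p/2$ on $\int_0^T\|X\|_V^\alpha\,ds$ and may require $p$ to be large relative to $\beta,\gamma$. If so, I would instead rederive the estimate directly. Apply It\^o's formula to $|X^n(t)|_H^p$, use \hypref{H3} together with $(u,u-\pi(u))\ge0$ to drop the penalty term, and use BDG with \hypref{H5} for the martingale term. Gronwall's lemma then gives the bound, and in this direct argument the constant is in fact independent of $n$.
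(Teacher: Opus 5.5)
Your proposal is correct and takes the same route as the paper, which simply asserts that the well-posedness of \eqref{pSPDE} follows from \cite{RSZ24}. Your check that the penalty $-n(u-\pi(u))$ preserves \hypref{H1}--\hypref{H5} (via monotonicity of $I-\pi$, the identity $(u,u-\pi(u))=|u|_H|u-\pi(u)|_H\ge 0$, and $|u-\pi(u)|_H\le |u|_H$) supplies the verification the paper leaves implicit. Your fallback It\^o argument for $|X^n|_H^p$ is also sound and gives an $n$-independent bound, which matches the uniform estimate the paper later uses in Step 2.
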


In the following, we give some estimates on $X^n$ whose proofs are similar to the corresponding results in  (c.f. \cite{BZ23}).  For readers' convenience, short proofs are provided in the Appendix. In the next section we will prove the convergence of the sequence $\{X^n\}$ and show that the limit is a solution of equation \eqref{SEE}.
\begin{lem}\label{lem3.4}
For $T>0$, there exist nonnegative constants $K_0=K_0(T)$ and $K_1=K_1(T)$ such that 
\begin{equation}\label{lem3.4-1}
	\sup_n \mathbb{E}\left[ \sup_{t\in[0,T]} |X^n(t)|_H^4 \right] \leq K_0,
\end{equation}
\begin{equation}\label{lem3.4-2}
	\mathbb{E}\left[ \int_0^T |X^n(t)|_H^2 \langle X^n(t), X^n(t) - \pi(X^n(t)) \rangle \, dt \right] \leq \frac{K_1}{n}, \quad n\in\mathbb{N}.
\end{equation}
\end{lem}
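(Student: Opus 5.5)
Since $X_0\in\overline{D}$, we have $|X_0|_H\le 1$. The natural tool is It\^{o}'s formula for $|X^n|_H^4$, applied in the variational setting. We apply it to $f(x)=|x|_H^4$ along \eqref{pSPDE}, which is justified through the It\^{o} formula for $|X^n|_H^2$ (Krylov's/Pardoux's lemma, available since $X^n$ is a variational solution by Theorem \ref{thm3.2}) followed by the real-valued It\^{o} formula for $y\mapsto y^2$. With $Y(t):=|X^n(t)|_H^2$ this gives
\begin{align*}
|X^n(t)|_H^4 ={}& |X_0|_H^4 + 2\int_0^t |X^n|_H^2\bigl(2\langle A(s,X^n),X^n\rangle + \|B(s,X^n)\|_{L_2}^2\bigr)ds
+ 4\int_0^t |(X^n)^*\circ B(s,X^n)|_{U}^2\,ds\\
&- 4n\int_0^t |X^n|_H^2 \langle X^n, X^n-\pi(X^n)\rangle ds + 4\int_0^t |X^n|_H^2 (X^n, B(s,X^n)dW(s)).
\end{align*}
The penalty term has the good sign: by \eqref{pi2}, $\langle X^n,X^n-\pi(X^n)\rangle=|X^n|_H|X^n-\pi(X^n)|_H\ge 0$. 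By \hypref{H3}, the drift term is at most $2|X^n|^2_H\bigl(C_0(1+|X^n|_H^2)-c\|X^n\|_V^\alpha\bigr)$. The quadratic-variation term is bounded by $4|X^n|_H^2\|B\|_{L_2}^2\le 4C_0|X^n|_H^2(1+|X^n|_H^2)$ using \hypref{H5}. Hence
\[
|X^n(t)|_H^4 + 2c\int_0^t|X^n|_H^2\|X^n\|_V^\alpha ds + 4n\int_0^t |X^n|_H^2\langle X^n,X^n-\pi(X^n)\rangle ds \le 1 + C\int_0^t(1+|X^n|_H^4)ds + M(t),
\]
where $M$ is the stochastic integral. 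The constant $C$ depends only on $C_0$, not on $n$.

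For \eqref{lem3.4-1}, we take the supremum over $[0,t]$ and then expectations. Before doing so we localize with stopping times $\tau_R=\inf\{t:|X^n(t)|_H>R\}\wedge T$, so that $M$ is a true martingale and all quantities are finite. Theorem \ref{thm3.2} with $p=4$ already gives finiteness for each fixed $n$, so localization is only a technicality. The Burkholder--Davis--Gundy inequality then gives
\[
\E\sup_{s\le t}|M(s)|\le C\,\E\Bigl(\int_0^t |X^n|_H^6\|B\|_{L_2}^2ds\Bigr)^{1/2}\le \tfrac12\E\sup_{s\le t}|X^n(s)|_H^4 + C\int_0^t\E(1+|X^n|_H^4)ds,
\]
using \hypref{H5} and Young's inequality. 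Absorbing the first term on the left and applying Gronwall's lemma yields $\E\sup_{t\le T}|X^n(t)|_H^4\le K_0(T)$ uniformly in $n$; we then let $R\to\infty$ by Fatou's lemma.

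For \eqref{lem3.4-2}, we take expectations at $t=T$ in the inequality above, where the martingale term has zero mean after localization. Using the bound just obtained, this gives $4n\,\E\int_0^T|X^n|_H^2\langle X^n,X^n-\pi(X^n)\rangle dt\le 1+CT(1+K_0)$, so we may take $K_1=(1+CT(1+K_0))/4$.

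The main obstacle is the rigorous justification of It\^{o}'s formula for $|X^n|_H^4$ in the Gelfand-triple setting, together with the localization needed to make the stochastic integral a martingale. Everything else follows from the sign of the penalty term together with \hypref{H3} and \hypref{H5}, and the constants are uniform in $n$.
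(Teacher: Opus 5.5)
Your proposal is correct and follows essentially the same route as the paper. The shared steps are:
\begin{itemize}
\item It\^{o}'s formula for $|X^n|_H^4$;
\item nonnegativity of the penalty term via \eqref{pi2};
\item \hypref{H3} and \hypref{H5} to control the drift and quadratic-variation terms;
\item Burkholder--Davis--Gundy with Young's inequality and Gronwall for \eqref{lem3.4-1};
\item reading off \eqref{lem3.4-2} from the same inequality.
\end{itemize}
You obtain the $|\cdot|_H^4$ formula by composing the $|\cdot|_H^2$ formula with $y\mapsto y^2$, localize explicitly with $\tau_R$, and use $|X_0|_H\le 1$; these only make rigorous details the paper leaves implicit.
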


\begin{lem}\label{lem3.6}
For $T>0$, there exist nonnegative constants $M_{1}=M_{1}(T), M_{2}=M_{2}(T)$ and $M_{3}=M_{3}(T)$ such that
\begin{align}
	\label{lem3.6-1}
	\sup_{n}\mathbb{E}\left[\left(n\int_{0}^{T}\left|X^{n}(s)-\pi\left(X^{n}(s)\right)\right|_{H}ds\right)^{2}\right]&\leq M_{1},\\
	\label{lem3.6-2}\sup_{n}\E\left[n\int_{0}^{T}\left|X^{n}(s)-\pi\left(X^{n}(s)\right)\right|_{H}^{2}ds \right] &\leq  M_{2}\\
	\label{lem3.6-3}
	\sup_{n}\mathbb{E}\left[\int_{0}^{T}\left\|X^{n}(s)\right\|_V^{\alpha}ds\right]&\leq M_{3}.
\end{align}
\end{lem}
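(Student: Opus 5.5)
The plan is to apply It\^o's formula to $|X^n(t)|_H^2$ and to exploit the algebraic identities of Lemma \ref{lempi}. This route also uses Lemma \ref{lem3.4} and the assumption $X_0\in\overline{D}$. By It\^o's formula, coercivity \hypref{H3} and \eqref{pi2} we have
\begin{align*}
|X^n(t)|_H^2 + c\int_0^t\|X^n(s)\|_V^\alpha ds + 2n\int_0^t |X^n(s)|_H\,|X^n(s)-\pi(X^n(s))|_H\,ds \le |X_0|_H^2 + C_0\int_0^t(1+|X^n(s)|_H^2)ds + M^n(t),
\end{align*}
where $M^n(t)=2\int_0^t(X^n(s),B(s,X^n(s))dW(s))$. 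Take expectations; the martingale term has zero mean, which is justified by \hypref{H5} and the fourth-moment bound \eqref{lem3.4-1}. Together with \eqref{lem3.4-1}, this gives \eqref{lem3.6-3} at once. It also gives
\begin{equation*}
\E\Big[n\int_0^T |X^n|_H|X^n-\pi(X^n)|_H\,ds\Big]\le C.
\end{equation*}
On the set where $X^n\neq\pi(X^n)$ we have $|X^n|_H>1$ and $|X^n-\pi(X^n)|_H=|X^n|_H-1<|X^n|_H$. Hence $|X^n-\pi(X^n)|_H^2\le |X^n|_H\,|X^n-\pi(X^n)|_H$, and \eqref{lem3.6-2} follows. (A first-moment bound for $n\int_0^T|X^n-\pi(X^n)|_H\,ds$ also follows, because $|X^n|_H\ge1$ on that set.)

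For \eqref{lem3.6-1} we need a second moment, so we do not take expectations right away. Keep the pathwise inequality, drop the nonnegative $V$-norm term, and use $|X^n|_H\ge 1$ where the penalty is active. This gives, pathwise,
\begin{equation*}
2n\int_0^T|X^n-\pi(X^n)|_H\,ds\le |X_0|_H^2+C_0\int_0^T(1+|X^n|_H^2)ds+\sup_{t\le T}|M^n(t)|.
\end{equation*}
Square both sides and take expectations. The first two terms are controlled by $|X_0|_H\le1$ and by \eqref{lem3.4-1}. For the martingale we use the Burkholder--Davis--Gundy inequality and \hypref{H5}:
\begin{equation*}
\E\sup_t|M^n(t)|^2\le C\,\E\int_0^T|X^n|_H^2\|B(s,X^n)\|_{L_2}^2ds\le C\,\E\int_0^T|X^n|_H^2(1+|X^n|_H^2)ds,
\end{equation*}
which is bounded uniformly in $n$ by \eqref{lem3.4-1}. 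This yields \eqref{lem3.6-1}.

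The main obstacle is rigor in the It\^o formula and in the vanishing of the stochastic-integral expectation; both rely on the integrability provided by Theorem \ref{thm3.2}. A localization by stopping times $\tau_R=\inf\{t:|X^n(t)|_H>R\}\wedge T$, followed by Fatou's lemma as $R\to\infty$, handles this. Otherwise the argument is routine; the key point is simply that the penalty term is positive and comparable to $|X^n-\pi(X^n)|_H$ thanks to \eqref{pi2}.
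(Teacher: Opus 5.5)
Your proposal is correct and follows essentially the same route as the paper. The paper also applies It\^o's formula to $|X^n(t)|_H^2$, uses coercivity \hypref{H3} and the nonnegativity of the penalty term from \eqref{pi2}, takes expectations to get \eqref{lem3.6-2} and \eqref{lem3.6-3}, and squares and applies Burkholder with \hypref{H5} and \eqref{lem3.4-1} to get \eqref{lem3.6-1}. The only cosmetic difference is that the paper splits $\langle X^n,X^n-\pi(X^n)\rangle=|X^n-\pi(X^n)|_H^2+|X^n-\pi(X^n)|_H$, while you write it as $|X^n|_H\,|X^n-\pi(X^n)|_H$ and use $1\le|X^n|_H$ and $|X^n-\pi(X^n)|_H\le|X^n|_H$ where the penalty is active, which is the same identity.
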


\begin{lem}\label{lem3.8}
For $T>0$,
\begin{equation}\label{lem3.8-1}
	\lim_{n\to\infty}\mathbb{E}\left[\sup_{t\in[0,T]}\left|X^{n}(t)-\pi\left(X^{n}(t)\right)\right|_{H}^{4}\right]=0.
\end{equation}
\end{lem}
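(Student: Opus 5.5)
We apply It\^o's formula to $\phi(X^n(t))=\frac12|X^n(t)-\pi(X^n(t))|_H^2$. Equivalently, write $\psi(x)=(|x|_H-1)_+$ and consider $\psi(X^n)^2$; then raise to the square to get fourth moments. The function $\phi$ is $C^1$ with Lipschitz gradient $\nabla\phi(y)=y-\pi(y)$. Its second derivative exists except on the sphere $|y|_H=1$ and satisfies $0\le D^2\phi\le I$. So, after a standard smoothing of $\phi$ (or using the It\^o formula for $C^{1,1}$ functions of variational solutions, as in \cite{BZ23}), we obtain
\begin{align*}
d\,\phi(X^n)&=\langle A(s,X^n),X^n-\pi(X^n)\rangle ds-n|X^n-\pi(X^n)|_H^2ds\\
&\quad+(X^n-\pi(X^n),B(s,X^n)dW)+\tfrac12\operatorname{tr}\big(D^2\phi\,BB^*\big)ds .
\end{align*}
The term $\langle A(s,X^n),X^n-\pi(X^n)\rangle$ is the delicate one, since $A$ is only $V^*$-valued and we only have coercivity. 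On $\{|X^n|_H>1\}$ we have $X^n-\pi(X^n)=(1-|X^n|_H^{-1})X^n$, so
\[
\langle A(s,X^n),X^n-\pi(X^n)\rangle=(1-|X^n|_H^{-1})\langle A(s,X^n),X^n\rangle .
\]
By \hypref{H3} this is at most $\frac12(1-|X^n|^{-1}_H)\big(C_0(1+|X^n|_H^2)-\|B\|_{L_2}^2\big)$, so the $V$-norm term can be dropped. The $-\|B\|^2_{L_2}$ contribution partly cancels the trace term. In any case the trace term is bounded by $\frac12\|B\|_{L_2}^2\mathbf 1_{\{|X^n|_H>1\}}\le C(1+|X^n|_H^2)\mathbf 1_{\{|X^n|_H>1\}}$ by \hypref{H5}. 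Set $\theta_n:=|X^n-\pi(X^n)|_H=\psi(X^n)$. Using $(1-|X^n|^{-1})\le \theta_n$ on the exterior, all drift terms except the penalty are bounded by $C(1+|X^n|_H^2)\theta_n$. We should be careful that the trace term is $O(1+|X^n|^2)$ on the set $\{|X^n|_H>1\}$ without a factor $\theta_n$. We handle this with the exact Hessian $D^2\phi=(1-|y|^{-1})I+|y|^{-1}\hat y\otimes\hat y$ on the exterior. It gives $\operatorname{tr}(D^2\phi BB^*)\le \theta_n\|B\|^2+|X^n|^{-2}|B^*X^n|^2$. The last piece is not small, but it equals the contribution of the radial noise only.

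Given this, the natural route is the following. Apply It\^o to $\theta_n^4=4\phi(X^n)^2$, or more simply to $\phi(X^n)^2$. This yields
\[
d\theta_n^4\le\big[-4n\theta_n^4+C(1+|X^n|_H^2)\theta_n^3+C(1+|X^n|_H^2)\theta_n^2\big]ds+dM_n .
\]
The martingale $M_n$ has $d\langle M_n\rangle\le C\theta_n^6(1+|X^n|_H^2)ds$. We use Young's inequality to absorb $\theta_n^3,\theta_n^2$ into $n\theta_n^4$ plus remainders $\frac{C}{n^{3}}(1+|X^n|^2)^4$ and $\frac Cn(1+|X^n|^2)^2$. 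Then we take $\sup_t$ and expectations and control the martingale by BDG:
\[
\E\sup_t|M_n|\le \tfrac12\E\sup_t\theta_n^4+C\E\int_0^T\theta_n^2(1+|X^n|_H^2)ds .
\]
By Lemma \ref{lem3.6} \eqref{lem3.6-2} together with \eqref{lem3.4-1} (via H\"older), the last integral is $O(n^{-1/2})$ or better. It is cleanest to bound $\int\theta_n^2(1+|X^n|^2)$ directly by \eqref{lem3.4-2}, using $\theta_n^2\le|X^n|_H\theta_n=\langle X^n,X^n-\pi(X^n)\rangle$ on the exterior. Since $\theta_n(0)=0$ because $X_0\in\overline D$, we conclude
\[
\E\sup_t\theta_n^4\le \frac Cn\big(1+\E\sup_t|X^n|_H^4\big)+\frac{K_1}{n}\to0 .
\]
Here we need some higher moment, or a further splitting, for the $\frac1{n^3}$ term. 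We use the power $4$ of $|X^n|$, which is at most $K_0$ by Lemma \ref{lem3.4}, after choosing the Young exponents to produce only $|X^n|^4$.

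The main obstacle is the justification of It\^o's formula for the non-$C^2$ function $\phi^2$ applied to a variational solution whose drift is only in $V^*$. I would approximate $\psi$ by smooth convex radial functions $\psi_\varepsilon(|x|^2)$, apply the Krylov--Rozovskii/Pardoux It\^o formula for functionals of the form $f(|x|_H^2)$, and pass to the limit $\varepsilon\to0$. This works because the pairing $\langle A,X^n\rangle$ only enters multiplied by the scalar $f'(|X^n|^2)$, and that is exactly the form for which \hypref{H3} gives a one-sided bound.
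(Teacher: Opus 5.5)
Your skeleton matches the paper's: It\^o's formula for $\theta_n^4=|X^n-\pi(X^n)|_H^4$, the radial identity $X^n-\pi(X^n)=(1-|X^n|_H^{-1})X^n$ to invoke \hypref{H3}, the observation that the radial part of the Hessian comes multiplied by $\theta_n^2$, BDG, and a final appeal to \eqref{lem3.4-2} and \eqref{lem3.6-2}. One step, however, fails as written. Bounding $1-|X^n|_H^{-1}\le\theta_n$ produces the drift term $C(1+|X^n|_H^2)\theta_n^3$. You then claim that Young's inequality against $n\theta_n^4$ can be arranged to leave only $|X^n|_H^4$. That is false. Absorbing $\theta_n^3$ into $\theta_n^4$ forces the conjugate exponent $4$ on $1+|X^n|_H^2$, i.e.\ an eighth moment. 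More intrinsically, on the exterior $|X^n|_H=1+\theta_n$, so the term is of order $\theta_n^5$, and $n\theta_n^4$ dominates it only on $\{\theta_n\lesssim n\}$. Any remainder must therefore contain something like $|X^n|_H^5\mathbf{1}_{\{|X^n|_H\gtrsim n\}}$, which neither \eqref{lem3.4-1} nor \eqref{lem3.4-2} controls. So your final bound $\E\sup_t\theta_n^4\le \frac{C}{n}(1+\E\sup_t|X^n|_H^4)+\frac{K_1}{n}$ is not justified from Lemma \ref{lem3.4} alone.

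The repair is immediate, and it is what the paper does: use $0\le 1-|X^n|_H^{-1}\le 1$ instead. Then $4\theta_n^2\langle A(s,X^n),X^n-\pi(X^n)\rangle\le 2C_0\theta_n^2(1+|X^n|_H^2)$. This term, the trace term, the quadratic-variation term and the BDG remainder are all bounded by $C\theta_n^2(1+|X^n|_H^2)$. The penalty can simply be dropped, with no absorption needed. Finally, $\E\int_0^T\theta_n^2(1+|X^n|_H^2)\,ds\le (M_2+K_1)/n$ by \eqref{lem3.6-2} and \eqref{lem3.4-2}, together with your own observation $\theta_n^2\le\langle X^n,X^n-\pi(X^n)\rangle$. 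Alternatively, you can keep your route if you first prove uniform bounds on $\E\sup_t|X^n(t)|_H^8$. These follow from It\^o's formula for $|X^n|_H^p$ exactly as in Lemma \ref{lem3.4}, since the penalty has a favourable sign; the paper uses such bounds in Step 2. Also, what you call the main obstacle is not one. Since $\theta_n^4=F(|X^n|_H^2)$ with $F(r)=((\sqrt r-1)_+)^4\in C^2([0,\infty))$, It\^o's formula follows from the Krylov--Rozovskii formula for $|X^n|_H^2$ plus the one-dimensional It\^o formula, without any smoothing. That is why the paper can apply It\^o's formula to $G$ directly.
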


\section{The existence and the uniqueness of solutions to the reflected problem}

The aim of this section is to prove the following main result of the paper.

\begin{thm}\label{result}
Suppose that the embedding $V\subseteq H$ is compact and \hypref{H1}-\hypref{H5} are satisfied. The reflected equation \eqref{SEE} with inital value $X_0\in\overline{D}$ admits a unique solution $(X, L)$ in the sense of Definition \ref{solutiondef} that satisfies, for $T > 0$,
\begin{equation}\label{thn4.1-1}
	\mathbb{E}\left[\sup_{t\in[0,T]}\left|X(t)\right|_{H}^{2}+\int_{0}^{T}\left\|X(t)\right\|^{\alpha}_V dt\right]<\infty.
\end{equation}
\end{thm}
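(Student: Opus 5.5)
We obtain the solution as the limit of the penalized solutions $X^n$ of \eqref{pSPDE}, with $L^n(t):=-n\int_0^t (X^n(s)-\pi(X^n(s)))\,ds$, and we prove uniqueness separately.

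\emph{Uniqueness.} Let $(X_1,L_1)$ and $(X_2,L_2)$ be two solutions. Apply It\^o's formula to $|X_1-X_2|_H^2$ and multiply by the exponential weight $\exp\bigl(-\int_0^t [C_0+\rho(X_1)+\eta(X_2)]ds\bigr)$, which is finite since $X_i\in L^\alpha(0,T;V)$ and $|X_i|_H\le1$. The cross term $2\int (X_1-X_2, dL_1-dL_2)$ is nonpositive. Indeed, by (iv) of Definition \ref{solutiondef} with $\phi=X_2$ (continuous and $\overline D$-valued) we get $\int (X_2-X_1,dL_1)\ge0$, and symmetrically $\int (X_1-X_2,dL_2)\ge 0$. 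Then \hypref{H2} and a stopping argument give $X_1=X_2$, and hence $L_1=L_2$ from (iii).

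\emph{Step 1: $\{X^n\}$ is Cauchy in $L^2(\Omega;C([0,T];H))$.} Apply It\^o to $|X^n-X^m|_H^2$. The penalty cross term is
\[
-2\bigl(X^n-X^m,\,n(X^n-\pi X^n)-m(X^m-\pi X^m)\bigr).
\]
Using \eqref{pi3} (write $X^n-X^m=(X^n-\pi X^n)+(\pi X^n-X^m)$ and so on), it is bounded by $2(n+m)|X^n-\pi X^n|_H|X^m-\pi X^m|_H$, as in \cite{BZ23}. Its expectation tends to $0$ by Lemmas \ref{lem3.6} and \ref{lem3.8}, via Cauchy--Schwarz:
\[
\E\Bigl[\sup_t|X^m-\pi X^m|_H\, n\int_0^T|X^n-\pi X^n|_H\Bigr]\le (\E\sup|X^m-\pi X^m|^2)^{1/2}M_1^{1/2}\to0 .
\]
The $A,B$ part is handled by \hypref{H2} with the random weight $e^{-\int_0^t(C_0+\rho(X^n)+\eta(X^m))ds}$. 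This is the delicate point, since the weight has no uniform moments. I would localize with stopping times $\tau_R^{n,m}$ on which $\int_0^t\|X^n\|_V^\alpha+\|X^m\|_V^\alpha\le R$, where $|X^n|_H\le 1+\sup|X^n-\pi X^n|$ is controlled. On that event the weight is bounded below by $e^{-C(R)}$, and Chebyshev with \eqref{lem3.6-3} controls $\P(\tau_R<T)$. This yields convergence in probability in $C([0,T];H)$, and uniform integrability from \eqref{lem3.4-1} upgrades it to $L^2$. Call the limit $X$. It is continuous, adapted, and $\overline D$-valued by Lemma \ref{lem3.8}. Fatou and Lemma \ref{lem3.6} give \eqref{thn4.1-1} and $X\in L^\alpha(0,T;V)$, the latter via weak limits in $L^\alpha(\Omega\times[0,T];V)$.

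\emph{Step 2: identifying the limit.} By \hypref{H4} and the bounds, $A(\cdot,X^n)$ is bounded in $L^{\alpha/(\alpha-1)}(\Omega\times[0,T];V^*)$ after localization. We therefore extract $A(\cdot,X^n)\rightharpoonup \mathcal A$ weakly, and $B(\cdot,X^n)\to B(\cdot,X)$ strongly by \eqref{lip}. The Bochner integral map $I$ is weakly continuous, so $\int_0^\cdot A(s,X^n)ds\rightharpoonup\int_0^\cdot\mathcal A\,ds$ in $L^1(\Omega;C([0,T];V^*))$, and hence $L^n\rightharpoonup L:=X-X_0-\int\mathcal A-\int B\,dW$. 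By Mazur's lemma, convex combinations of the $L^n$ converge strongly, and a.s. along a subsequence, in $C([0,T];V^*)$. Total variations are lower semicontinuous under this convergence, and convex combinations preserve bounds, so \eqref{lem3.6-1} gives $\E[\mathrm{Var}_H(L)^2]\le M_1$ and $L$ is $H$-valued of bounded variation.

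For (iv), note that $\int_0^T(\phi-X^n,dL^n)=n\int(X^n-\phi,X^n-\pi X^n)ds\ge0$ by \eqref{pi3}. Passing to the limit needs $X^n\to X$ uniformly in $H$ together with the Riemann--Stieltjes integrals against convex combinations. I would approximate $\phi-X$ uniformly by step functions and use the uniform variation bounds.

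Finally, we show $\mathcal A=A(\cdot,X)$ by a pseudo-monotonicity/Minty argument. Instead of comparing $\E|X(t)|^2$ through It\^o's formula for $X$, which is not yet known to be an It\^o process, we use the $H$-strong convergence from Step 1. Write It\^o for $|X^n|^2$ and use $\liminf\E\int\langle A(X^n),X^n\rangle\ge\dots$ together with the sign of the penalty term $-2n(X^n,X^n-\pi X^n)\le0$, whose limit $\int(X,dL)$ is identified from the strong convergence. Then \hypref{H1}, \hypref{H2} and the test $X-\lambda v$ with $\lambda\to0$ give the identification.

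The main obstacle is this identification step together with the localization needed for the fully local monotone weights. I expect the key point to be that the penalty contribution to the energy identity passes to the limit.
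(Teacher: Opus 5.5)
Most of your outline follows the paper. The shared parts are: the weighted It\^o estimate for $|X^n-X^m|_H^2$, with the penalty cross terms controlled through \eqref{pi3} and Lemmas \ref{lem3.6}, \ref{lem3.8}; weak continuity of the Bochner integral map, Mazur's lemma and lower semicontinuity of the variation to build $L$; convex combinations for (iv); and the weighted It\^o argument for uniqueness. In (iv) your step functions must take values in $V$, since $\tilde L^m\to L$ only in $C([0,T];V^*)$.

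The genuine gap is the inequality $\liminf_{n}\E\int_0^T\langle A(t,X^n(t)),X^n(t)\rangle\,dt\ge\E\int_0^T\langle\mathcal A(t),X(t)\rangle\,dt$. You leave it as ``$\ge\dots$'', but it is the whole content of the identification step. It\^o's formula for $|X^n|_H^2$ gives
\[
2\E\int_0^T\langle A(t,X^n),X^n\rangle\,dt=\E|X^n(T)|_H^2-\E|X_0|_H^2-\E\int_0^T\|B(t,X^n)\|_{L_2}^2\,dt-2\E\int_0^T\bigl(X^n(t),L^n(dt)\bigr),
\]
and strong convergence lets you pass to the limit in the first three terms. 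To compare the result with $2\E\int_0^T\langle\mathcal A,X\rangle\,dt$, however, you need the energy identity
\[
\E|X(T)|_H^2=\E|X_0|_H^2+2\E\int_0^T\langle\mathcal A,X\rangle\,dt+\E\int_0^T\|B(t,X)\|_{L_2}^2\,dt+2\E\int_0^T(X(t),L(dt))
\]
for the limit. That is exactly the It\^o formula for $X$ that you declared unavailable.

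The sign of the penalty term does not replace it. Dropping that term gives only the lower bound $\E|X(T)|_H^2-\E|X_0|_H^2-\E\int_0^T\|B(t,X)\|_{L_2}^2dt$. Even granting the energy identity, this falls short by the nonnegative reflection contribution $-2\E\int_0^T(X(t),L(dt))$. Moreover, $\E\int_0^T(X^n,L^n(dt))\to\E\int_0^T(X,L(dt))$ does not follow from strong convergence of $X^n$ alone, because $L^n\rightharpoonup L$ only weakly in $L^1(\Omega;C([0,T];V^*))$. You would have to test against bounded, time-regular, $V$-valued approximations of $X$ and use \eqref{lem3.6-1}.

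The paper closes this step using It\^o's formula only for differences of penalized solutions, which are genuine It\^o processes. It applies the formula to $|X^n(T)-X^m(T)|_H^2$:
\begin{itemize}
\item the terms $\E|X^n(T)-X^m(T)|_H^2$, the $B$-difference (by \eqref{lip}) and the martingale vanish by the Cauchy property of Step 1;
\item each penalty cross term splits as $n\langle X^n-\pi X^n,X^n-\pi X^m\rangle+n\langle X^n-\pi X^n,\pi X^m-X^m\rangle$;
\item the first part is nonnegative by \eqref{pi3}, and the second tends to $0$ by \eqref{lem3.6-1} and Lemma \ref{lem3.8}, the same estimate you already use in Step 1.
\end{itemize}
This gives $\liminf_{m,n\to\infty}\E\int_0^T\langle A(t,X^n)-A(t,X^m),X^n-X^m\rangle\,dt\ge0$. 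Expanding the bracket, letting $m\to\infty$ with $n$ fixed (using $X^m\rightharpoonup X$ and $A(\cdot,X^m)\rightharpoonup\mathcal A$), and then letting $n\to\infty$ yields \eqref{prop4.5-1}. Proposition \ref{prop4.5} then identifies the limit. Your Minty argument with $X-\lambda v$ would also work at that point, since strong convergence in $C([0,T];H)$ and uniform moments control the $\rho(X^n)$ term.

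Alternatively, you could keep your route by first proving an It\^o formula for $|X|_H^2$ when $X=X_0+\int_0^\cdot\mathcal A\,ds+\int_0^\cdot B(s,X)\,dW+L$, with $L$ continuous and of bounded variation in $H$. Your uniqueness argument, like the paper's, tacitly relies on such a formula. You would also need the weak-convergence argument for the penalty term above. As written, the key inequality is not established.

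Separately, the $L^{\alpha/(\alpha-1)}$ bound on $A(\cdot,X^n)$ needs no localization. It\^o's formula for $|X^n|_H^p$ with $p\ge\beta+2$ gives it globally, since the penalty term has a good sign, and this keeps the weak-limit and Mazur steps clean.
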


\noindent\textit{Proof of Theorem \ref{result}.}
We will show that the sequence $\{X^{n},n\geq 1\}$ defined in \eqref{pSPDE} converges to a solution to equation \eqref{SEE}.

\vspace{1em}
\step{1} We show that $\{X^n\}$ as a sequence of $C([0,T];H)$-valued random variables converge in $L^2(\Omega)$.
\begin{lem}\label{lem4.3}
There exists an $H$-valued continuous adapted process $X$ such that
\begin{equation}\label{lem4.3-1}
	\lim_{n\to\infty}\mathbb{E}\left[\sup_{t\in[0,T]}\left|X^{n}(t)-X(t)\right|_{H}^{2}\right]=0.
\end{equation}
\end{lem}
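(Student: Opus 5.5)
We will show that $\{X^n\}$ is Cauchy in $L^2(\Omega;C([0,T];H))$. Completeness then gives a limit $X$, which is continuous and adapted.

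Fix $n,m$ and set $Z=X^n-X^m$. Apply It\^o's formula to $|Z(t)|_H^2$ in the Gelfand triple:
\begin{align*}
|Z(t)|_H^2 &= \int_0^t \Big(2\langle A(s,X^n)-A(s,X^m),Z\rangle+\|B(s,X^n)-B(s,X^m)\|_{L_2}^2\Big)ds\\
&\quad +2\int_0^t (Z,(B(s,X^n)-B(s,X^m))dW) - 2\int_0^t \big(Z, n(X^n-\pi(X^n))-m(X^m-\pi(X^m))\big)ds .
\end{align*}

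\textbf{Penalty term.} This follows \cite{BZ23}. Write $Z=(X^n-\pi(X^n))+(\pi(X^n)-X^m)$, and similarly for the $m$-term. By the variational inequality \eqref{pi3} with $y=\pi(X^m)\in\overline D$,
\[
-n(X^n-\pi(X^m),X^n-\pi(X^n))\le 0 .
\]
What remains is $n(X^m-\pi(X^m),X^n-\pi(X^n))$. So the cross term is bounded by
\[
(n+m)\int_0^t|X^n-\pi(X^n)|_H|X^m-\pi(X^m)|_H\,ds .
\]
After taking $\sup_t$ and expectations, we bound
\[
\E\Big[\sup_s|X^m-\pi(X^m)|_H\; n\int_0^T|X^n-\pi(X^n)|_Hds\Big]
\]
by Cauchy--Schwarz, using \eqref{lem3.6-1} and Lemma \ref{lem3.8}. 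This tends to $0$ as $n,m\to\infty$, and symmetrically for the other term.

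\textbf{Monotone term.} By \hypref{H2} it is at most
\[
\int_0^t\big[C_0+\rho(X^n)+\eta(X^m)\big]|Z|_H^2\,ds .
\]
The weights $\rho,\eta$ are unbounded, so we use a stochastic Gronwall/localization trick. Set
\[
\Phi(t)=\exp\Big(-\int_0^t\big(C_0+|\rho(X^n(s))|+|\eta(X^m(s))|\big)ds\Big)
\]
and apply It\^o's product formula to $\Phi(t)|Z(t)|_H^2$. The local-monotone contribution is then absorbed, and the result is
\[
\Phi(t)|Z(t)|_H^2\le \int_0^t\Phi\,dM + \text{(penalty cross terms)} .
\]
Handle the martingale with BDG and \hypref{H5}, using $\|B(X^n)-B(X^m)\|_{L_2}^2\le C_0|Z|_H^2$ and absorbing into the left side; $\Phi\le1$ bounds the penalty terms by the estimates above. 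This gives
\[
\E\sup_t \Phi(t)|Z(t)|_H^2\to0 .
\]

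\textbf{Removing the weight (main obstacle).} By \hypref{H2}, \eqref{lem3.4-1} and \eqref{lem3.6-3}, $\int_0^T(|\rho(X^n)|+|\eta(X^m)|)ds$ is bounded in probability uniformly in $n,m$, since $\sup_t|X^n|_H\le$ a tight random variable and $\int\|X^n\|_V^\alpha$ is tight. Hence $\Phi(T)^{-1}$ is tight, and $\sup_t|Z|_H^2\to0$ in probability. Uniform integrability of $\sup_t|Z|_H^2$ follows from the fourth-moment bound \eqref{lem3.4-1}, which upgrades this to convergence in $L^1$. That is the Cauchy property in $L^2(\Omega;C([0,T];H))$.

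Alternatively, one can localize with stopping times
\[
\tau_R=\inf\Big\{t:\sup|X^n|_H+\int_0^t\|X^n\|_V^\alpha+\cdots>R\Big\},
\]
use $\P(\tau_R<T)\le C/R$ uniformly, and combine this with the uniform $L^4$ bound in the same way.
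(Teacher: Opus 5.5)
Your proof is correct and follows the paper's argument essentially step by step. Both use an exponentially weighted It\^o formula for $|X^n-X^m|_H^2$, handle the penalty cross terms with the variational inequality \eqref{pi3} and Cauchy--Schwarz against \eqref{lem3.6-1} and Lemma~\ref{lem3.8}, apply BDG and Gronwall, remove the weight by localization (the paper's sets $\Omega_M$ amount to your tightness of $\Phi(T)^{-1}$), and conclude via uniform integrability from \eqref{lem3.4-1}; your use of $|\rho|,|\eta|$ in the exponent is a small improvement, since it guarantees $\Phi\le 1$, which the paper uses for $f_{m,n}$ even though $\rho,\eta$ need not be nonnegative.
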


\begin{proof}
Choose and fix natural numbers $m\geq n$. 
For $\lambda>0$ and $n\in \mathbb{N}$, define a process $f_{m,n}$ by the following formula
\begin{align*}
	\phi_{m,n}(t)&:=C_0+\rho(X^{n}(t))+\eta(X^m(t))\\
	f_{m,n}(t)&:=\exp\left\{-\lambda\int_{0}^{t}\phi_{m,n}(s)ds \right\},\quad t\geq 0.
\end{align*}
Note that $\phi_{m,n}(t)$ is locally integrable $\P$-a.s. in view of \hypref{H2}, \eqref{lem3.4-1} and \eqref{lem3.6-2}. Applying the It\^{o}'s formula we have that
\begin{equation}\label{lem4.3-2}
	\begin{aligned}
		&f_{m,n}(t)\left|X^{n}(t)-X^{m}(t)\right|_{H}^{2}=-\lambda\int_{0}^{t}f_{m,n}(s)\phi_{m,n}(s)\left|X^{n}(s)-X^{m}(s)\right|_{H}^{2}ds \\
		&\qquad+4\int_{0}^{t}f_{m,n}(s)\left\langle X^{n}(s)-X^{m}(s),A(s,(X^{n}(s))-A(s,X^{m}(s)\right\rangle\,ds \\
		&\qquad+2\int_{0}^{t}f_{m,n}(s)\left\langle X^{n}(s)-X^{m}(s),\left(B(s,X^{n}(s))-B(s,X^{m}(s)) \right) dW(s) \right\rangle\\
		&\qquad-2n\int_{0}^{t}f_{m,n}(s)\left\langle X^{n}(s)-X^{m}(s),X^{n}(s)-\pi\left(X^{n}(s)\right)\right\rangle ds \\
		&\qquad+2m\int_{0}^{t}f_{m,n}(s)\left\langle X^{n}(s)-X^{m}(s),X^{m}(s)-\pi\left(X^{m}(s)\right)\right\rangle ds \\
		&\qquad+2\int_{0}^{t}f_{m,n}(s)\|B(s,X^n(s)-B(s,X^m(s)\|_{L^2}^2\,ds\\
		&\qquad:=I_{1}^{m,n}(t)+I_{2}^{m,n}(t)+I_{3}^{m,n}(t)+I_{4}^{m,n}(t)+I_{5}^{m,n}(t)+I_{6}^{m,n}(t).
	\end{aligned}
\end{equation}

By \hypref{H2} we know that
\begin{equation*}
	\begin{aligned}
		I_{2}^{m,n}(t)+I_{6}^{m,n}(t)\leq \int_0^t f^{m,n}(s)(C_0+\rho(X^n(s))+\eta(X^m(s))|X^n(s)-X^m(s)|_H^2\, ds.
	\end{aligned}
\end{equation*}
This, when combined with $I_1^{m,n}$, is negative for large enough $\lambda$, due to the specific form of $\phi_{m,n}$.  Using Burkholder's inequality analogous to \eqref{lem3.4-6} and by \hypref{H5}, the Lipschitz property of $B$, we have
\begin{equation}\label{lem4.3-3}
	\begin{aligned}
		\E\left[\sup_{s\in[0,t]}|I_{3}^{m,n}(s)|\right]
		\leq & \frac{1}{2}\E\left[\sup_{s\in[0,t]} (f_{m,n}(s)|X^n(s)-X^m(s)|_H^2)\right]\\
		&+C\,\E\left[\int_0^t f_{m,n}(s)\|B(s,X^n(s))-B(s,X^m(s))\|_{L^2}^2\, ds\right]\\
		\leq & \frac{1}{2}\E\left[\sup_{s\in[0,t]} (f_{m,n}(s)|X^n(s)-X^m(s)|_H^2)\right]\\
		&+C\,\E\left[\int_0^t f_{m,n}(s) |X^n(s)-X^m(s)|_{H}^2\, ds\right].
	\end{aligned}
\end{equation}
As $\pi(X^{n}(s))\in\overline{D}$ and $\pi(X^{m}(s))\in\overline{D}$, it follows from \eqref{pi3} that $\langle X^{n}(s)-\pi(X^{m}(s)),X^{n}(s)-\pi(X^{n}(s))\rangle\geq0$ and $\langle X^{m}(s)-\pi(X^{n}(s)),X^{m}(s)-\pi(X^{m}(s))\rangle\geq0$. Hence,
\begin{align}\label{lem4.3-4}
	I_{4}^{m,n}(t)&=-2n\int_{0}^{t}f_{m,n}(s)\langle X^{n}(s)-\pi(X^{m}(s)),(X^{n}(s)-\pi(X^{n}(s))\rangle ds \nonumber\\
	&\quad+2n\int_{0}^{t}f_{m,n}(s)\langle X^{m}(s)-\pi(X^{m}(s)),(X^{n}(s)-\pi(X^{n}(s))\rangle ds \nonumber\\
	&\leq 2n\int_{0}^{t}f_{m,n}(s)\langle X^{m}(s)-\pi(X^{m}(s)),X^{n}(s)-\pi(X^{n}(s))\rangle ds \nonumber\\
	&\leq \left(2n\int_{0}^{t}\left|X^{n}(s)-\pi(X^{n}(s))\right|_{H}ds\right)\sup_{0\leq s\leq t}\left|X^{m}(s)-\pi(X^{m}(s))\right|_{H},
\end{align}
as $f_{m,n}(s)\leq1$. The case for $I_{5}^{m,n}$ is analogous, achieved by swapping the roles of $n$ and $m$.

Substituting \eqref{lem4.3-3}-\eqref{lem4.3-4} into \eqref{lem4.3-2}, choosing $\lambda$ large enough, as in the proof of Lemma \ref{lem3.8}, using Burkholder's and Hölder's inequalities, as well as \hypref{H5}, we obtain that
\begin{align*}
	&\mathbb{E}\left[\sup_{0\leq s\leq t}\left(f_{m,n}(s)\left|X^{n}(s)-X^{m}(s)\right|_{H}^{2}\right)\right] \\
	&\quad\leq\frac{1}{2}\mathbb{E}\left[\sup_{0\leq s\leq t}\left(f_{m,n}(s)\left|X^{n}(s)-X^{m}(s)\right|_{H}^{2}\right)\right]+C\,\mathbb{E}\left[\int_{0}^{t}f_{m,n}(s)\left|X^{n}(s)-X^{m}(s)\right|_{H}^{2}ds\right] \\
	&\quad\quad+C\left(\mathbb{E}\left[\left(2n\int_{0}^{t}\left|X^{n}(s)-\pi(X^{n}(s))\right|_{H}ds\right)^{2}\right]\right)^{\frac{1}{2}}\left(\mathbb{E}\left[\sup_{0\leq s\leq t}\left|X^{m}(s)-\pi(X^{m}(s))\right|_{H}^{2}\right]\right)^{\frac{1}{2}} \\
	&\quad\quad+C\left(\mathbb{E}\left[\left(2m\int_{0}^{t}\left|X^{m}(s)-\pi(X^{m}(s))\right|_{H}ds\right)^{2}\right]\right)^{\frac{1}{2}}\left(\mathbb{E}\left[\sup_{0\leq s\leq t}\left|X^{n}(s)-\pi(X^{n}(s))\right|_{H}^{2}\right]\right)^{\frac{1}{2}}.
\end{align*}
By Gronwall's lemma and also \eqref{lem3.6-1} in Lemma \ref{lem3.6}, we get
\begin{equation*}
	\begin{aligned}
		&\mathbb{E}\left[\sup_{s\in[0,T]}\left(f_{m,n}(s)\left|X^{n}(s)-X^{m}(s)\right|_{H}^{2}\right)\right] \\
		&\quad\leq C(M_{T})^{\frac{1}{2}}\left(\mathbb{E}\left[\sup_{s\in[0,T]}\left|X^{m}(s)-\pi\left(X^{m}(s)\right)\right|_{H}^{2}\right]\right)^{\frac{1}{2}} \\
		&\quad\quad+C(M_{T})^{\frac{1}{2}}\left(\mathbb{E}\left[\sup_{s\in[0, T]}\left|X^{n}(s)-\pi\left(X^{n}(s)\right)\right|_{H}^{2}\right]\right)^{\frac{1}{2}}.
	\end{aligned}
\end{equation*}
From Lemma \ref{lem3.8} it follows that 
\begin{equation}\label{lem4.3-5}
	\lim_{m,n\to\infty}\E\left[\sup_{s\in[0,T]} \left(f_{m,n}(s)|X^n(s)-X^m(s)|_H^2\right)\right]=0.
\end{equation}

Now we are to prove the sequence of $C([0,T];H)$-valued random variables $\{X^n, n \ge 1\}$ is a Cauchy sequence in probablity, that is,
\[
\lim_{n,m \to \infty} \mathbb{P}\left( \sup_{s \in [0,T]} |X^n(s) - X^m(s)|_H^2\ge \delta \right) = 0, \quad \forall \delta > 0.
\]
Let $$\Omega_M=\left\{\sup_{s\in[0,T]} |X^i(s)|_H\leq M,\int_0^T \|X^i(s)\|_V^\alpha \, ds \leq M,i=n,m\right\}.$$  
On $\Omega_M$, $f_{m,n}(s)\geq e^{-2\lambda[(1+M^\gamma)(T+M)+C_0]}$ uniformly in $s\in[0,T]$.
Given $\delta > 0$, for any $M > 0$ we have
\begin{align*}
	&\mathbb{P}\left( \sup_{s \in [0,T]} |X^n(s) - X^m(s)|_H^2  \ge \delta \right)  \leq \mathbb{P}\left( \sup_{s \in [0,T]} |X^n(s) - X^m(s)|_H^2\geq \delta; 1_{\Omega_M}\right) +\mathbb{P}\left(\Omega_M^c\right)\\
	&\qquad\qquad\leq \P\left(\sup_{s\in[0,T]} \left(f_{m,n}(s)|X^n(s)-X^m(s)|_H^2\right)\geq \delta e^{-2\lambda[(1+M^\gamma)(T+M)+C_0]}\right)\\
	&\qquad\qquad\quad
	+\frac{C}{M^4}\sup_n\E\left[\sup_{s\in[0,T]} |X^n(s)|^4_H\right]
	+\frac{C}{M}\sup_n\E\left[\int_0^T \|X^n(s)\|_V^\alpha\,ds\right]
\end{align*}
Letting $m,n\to \infty$, by  Chebyshev inequality and \eqref{lem4.3-5}, together with \eqref{lem3.4-1} in Lemma \ref{lem3.4} and \eqref{lem3.6-3} in Lemma \ref{lem3.6}, the RHS of the above inequality converges to $0$, thereby showing that $\{X^n\}$ is a Cauchy sequence in probability. With this and the estimate \eqref{lem3.4-1}, the convergence in $L^2(\Omega;C([0,T];H))$ follows form Vitali convergence theorem. Hence, the proof of Lemma \ref{lem4.3} is complete.
\end{proof}

Fatou's lemma and Lemma \ref{lem3.8} give that
\begin{equation}\label{4.7}
\mathbb{E}\left[\sup _{s\in[0, T]}\left|X(s)-\pi(X(s))\right|_{H}^{2}\right]\leq\lim _{n\rightarrow\infty}\mathbb{E}\left[\sup _{s\in[0, T]}\left|X^{n}(s)-\pi\left(X^{n}(s)\right)\right|_{H}^{2}\right]=0. 
\end{equation}
This implies that $\mathbb{P}$-a.s., for every $t\in[0,T]$, $X(t)=\pi(X(t))\in\overline{D}$.

\vspace{2em}
\step{2}
Based on Lemma \ref{lem4.3} and the Lipschitz property of $B$ in \hypref{H5}, keep in mind that we have the following strong convergences:
\begin{itemize}
\item[(i)] $X^n\to X$ in $L^2(\Omega;C([0,T];H))$. 
\item[(ii)] $\int_0^\cdot B(s,X^{n}(s))dW(s)\to\int_0^\cdot B(s,X(s))dW(s) \text{ in } L^2(\Omega;C([0,T];H)).$
\end{itemize}
Denote the weak convergence by ``$\rightharpoonup$''. 
From (\ref{lem3.6-3}), there exists a subsequence (still denoted by $n$) such that
\begin{itemize}
\item[(iii)] $X^{n}\rightharpoonup{X}$ in $L^\alpha ([0,T]\times\Omega;V)$.
\item[(iv)] $A(\cdot,X^{n})\rightharpoonup Y$ in $L^{\frac{\alpha}{\alpha-1}}([0,T]\times\Omega;V^\ast).$
\end{itemize}
Here (iv) follows from the uniform estimate: for $p\geq 2,T> 0$, there exists $C(T,p)\geq 0$ such that
\begin{equation*}
\sup_n\Big\{\mathbb{E}\sup_{t\in[0,T]}|X^n(t)|_H^p+\mathbb{E}\int_0^T|X^n(t)|_H^{p-2}\|X^n(t)\|_V^\alpha dt\Big\} \leq C(T,p).
\end{equation*}
which is obtained by applying It\^{o}'s formula to $|X^n(t)|^p_H$.
Taking $p\geq\beta+2$ and using \hypref{H4} yields
$$\sup_n\|A(\cdot,X^n)\|_{L^{\frac{\alpha}{\alpha-1}}([0,T]\times\Omega;V^\ast)}\leq C(T,\beta),$$
whence the weak convergence follows.

\vspace{2em}
To identify $Y=A(\cdot,X)$, we will use the pseudo-monotonicity property of the operator $A$.  Recall the defintion of pseudo-monotonicity operator.
\begin{defn}
An operator $A$ from $V$ to $V^\ast$ is said to be pseudo-monotone, if the following property holds: if $u_n$ converges weakly to $u$ in $V$ and
\begin{equation*}
	\liminf_{n\to\infty}\langle A(u_n),u_n-u\rangle\geq 0,
\end{equation*}
then
\begin{equation*}
	\limsup_{n\to\infty}\langle A(u_n),u_n-v\rangle \leq \langle A(u),u-v\rangle,\quad \forall v\in V.
\end{equation*}
\end{defn}
Recall the following results from \cite{RSZ24} Lemma 2.15 and 2.16.

\begin{prop}\label{pseu}
Assume \hypref{H1} and \hypref{H2} hold, the embedding $V\subseteq H$ is compact. Then $A(t,\cdot)$ is pseudo-monotone from $V$ to $V^\ast$ for a.e. $t\in [0,T].$
\end{prop}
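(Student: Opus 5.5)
We have to show that, for a.e. $t$, $A(t,\cdot)$ is pseudo-monotone. Fix $t$ outside the null set where \hypref{H1} or \hypref{H2} fails and write $A=A(t,\cdot)$. Let $u_n\rightharpoonup u$ in $V$ with $\liminf_n\langle A(u_n),u_n-u\rangle\ge0$.

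\emph{Step 1: boundedness and strong $H$-convergence.} The sequence $u_n$ is bounded in $V$, and the embedding $V\subseteq H$ is compact, so $u_n\to u$ strongly in $H$. Local boundedness of $A$ would follow from \hypref{H4}. Since only \hypref{H1}--\hypref{H2} are assumed, I would first show that $A$ maps bounded sets to bounded sets. This is the standard fact that a hemicontinuous, locally monotone operator is locally bounded, proved via a Banach--Steinhaus argument as in Lemma 2.16 of \cite{RSZ24}. Hence, along a subsequence, $A(u_n)\rightharpoonup\xi$ in $V^*$.

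\emph{Step 2: the limsup equals zero.} Apply \hypref{H2} with $B$ dropped; $\|B(u)-B(v)\|^2$ is nonnegative and so can be discarded. With $v=u$ this gives
\[
2\langle A(u_n),u_n-u\rangle\le 2\langle A(u),u_n-u\rangle+\left[C_0+\rho(u_n)+\eta(u)\right]|u_n-u|_H^2 .
\]
The growth bound on $\rho$, together with the boundedness of $u_n$ in $V$ (and hence in $H$), makes $\rho(u_n)$ bounded. Also $|u_n-u|_H\to0$ and $\langle A(u),u_n-u\rangle\to0$. Therefore $\limsup_n\langle A(u_n),u_n-u\rangle\le0$, and combined with the hypothesis, $\lim_n\langle A(u_n),u_n-u\rangle=0$.

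\emph{Step 3: Minty trick.} For $w\in V$ and $s\in(0,1]$ set $v=u-sw$, and apply \hypref{H2} to the pair $(u_n,v)$:
\[
2\langle A(u_n)-A(v),u_n-v\rangle\le \left[C_0+\rho(u_n)+\eta(v)\right]|u_n-v|_H^2 .
\]
Write $\langle A(u_n),u_n-v\rangle=\langle A(u_n),u_n-u\rangle+s\langle A(u_n),w\rangle$ and pass to the limit using Step 2 and $A(u_n)\rightharpoonup\xi$. Since $|u_n-v|_H\to s|w|_H$, this yields
\[
2s\langle\xi-A(u-sw),w\rangle\le C s^2|w|_H^2 .
\]
The constant $C$ is uniform for $s\le1$ by the growth bound on $\eta$. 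Dividing by $s$ and letting $s\to0$, hemicontinuity \hypref{H1} gives $\langle\xi-A(u),w\rangle\le0$ for all $w$. Hence $\xi=A(u)$, and this limit does not depend on the subsequence.

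\emph{Step 4: conclusion.} For any $v\in V$,
\[
\langle A(u_n),u_n-v\rangle=\langle A(u_n),u_n-u\rangle+\langle A(u_n),u-v\rangle\to\langle A(u),u-v\rangle .
\]
A subsequence argument upgrades this to the limsup inequality for the full sequence.

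\emph{Main obstacle.} The delicate step is Step 1, the local boundedness of $A$ from \hypref{H1}--\hypref{H2} alone. I would attempt it by contradiction. Suppose $\|A(u_n)\|_{V^*}\to\infty$ along a bounded sequence, and normalize. Local monotonicity applied with arbitrary $v$ gives pointwise upper bounds $\langle A(u_n),u_n-v\rangle\le C(v)$. These, combined with $\langle A(u_n),u_n\rangle$ bounded below, yield pointwise bounds on $\langle A(u_n),v\rangle$ for each $v$. The uniform boundedness principle then contradicts the assumed blow-up. Alternatively, if \hypref{H4} is admitted, this step is immediate.
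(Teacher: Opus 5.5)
\textbf{There is a gap in Step 1.} Given a bound on $A(u_n)$, your Steps 2--4 are sound. But the bound itself rests on a false claim, and your sketch for it does not close.

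Under \hypref{H1}--\hypref{H2} and compactness alone, $A(t,\cdot)$ need not map bounded sets of $V$ into bounded sets of $V^*$. Here is a counterexample:
\begin{itemize}
\item Take $H=\ell^2$ and $V=\{x:\sum_k k^2x_k^2<\infty\}$; the embedding $V\subseteq H$ is compact.
\item Let $f(x)=\sum_k k\max(0,k|x_k|-\tfrac12)^2$. For each $x\in V$ this is a finite sum, and $f$ is convex and continuous on $V$.
\item Set $A=-\nabla f$ and $B=0$. Then $A$ is hemicontinuous and monotone, so \hypref{H1}--\hypref{H2} hold with $\rho=\eta=0$.
\item Let $e_n$ be the unit vectors. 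Then $\|e_n/n\|_V=1$, while $\|A(e_n/n)\|_{V^*}=n$ and $\langle A(e_n/n),e_n/n\rangle=-n$.
\end{itemize}
So the lower bound on $\langle A(u_n),u_n\rangle$ that your contradiction argument needs is not available for general bounded sequences. "Locally bounded" near each point is not the same as "bounded on bounded sets", and only the latter would give boundedness along a weakly convergent sequence. Also, by the paper's own citation, Lemma 2.16 of \cite{RSZ24} is the identification result restated here as Proposition \ref{prop4.5}, not a boundedness lemma.

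\textbf{How to repair it.} For the particular sequence in the definition, boundedness does hold, but the missing lower bound has to come from the hypothesis $\liminf_n\langle A(u_n),u_n-u\rangle\ge0$.
\begin{itemize}
\item As you note, \hypref{H2} gives $\langle A(u_n),u_n-v\rangle\le C(v)$ uniformly in $n$, for each fixed $v$.
\item For $n\ge n_0$ the hypothesis gives $\langle A(u_n),u_n-u\rangle\ge-1$. Subtracting yields $\langle A(u_n),u-v\rangle\le C(v)+1$ for all $v\in V$.
\item Since $u-v$ ranges over all of $V$, Banach--Steinhaus yields $\sup_{n\ge n_0}\|A(u_n)\|_{V^*}<\infty$.
\end{itemize}
This also works for every subsequence, which your Step 3 needs. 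With it, your proof goes through.

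\textbf{A simpler route.} Step 1 is in fact unnecessary. The paper gives no proof and just cites \cite{RSZ24}. The standard argument (monotone plus hemicontinuous implies pseudo-monotone) runs your Step 3 with $\limsup_n\langle A(u_n),w\rangle$ in place of $\langle\xi,w\rangle$. The same inequality gives $2s\limsup_n\langle A(u_n),w\rangle\le 2s\langle A(u-sw),w\rangle+Cs^2|w|_H^2$. Dividing by $s$ and using \hypref{H1} gives $\limsup_n\langle A(u_n),w\rangle\le\langle A(u),w\rangle$. Taking $w=u-v$ and combining with Step 2 yields exactly the required inequality, with no bound on $A(u_n)$ needed.

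Your repaired route does buy more: full weak convergence $A(u_n)\rightharpoonup A(u)$ and an actual limit in Step 4. The cost is the Banach--Steinhaus step, which must use the $\liminf$ hypothesis.
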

\begin{prop}\label{prop4.5}
If 
\begin{align*}
	X^n\rightharpoonup &X \quad in\ L^\alpha([0,T]\times\Omega;V),\nonumber\\
	A(\cdot,X^n(\cdot))\rightharpoonup& Y\quad in\ L^{\frac{\alpha}{\alpha-1}}([0,T]\times\Omega;V^\ast),\nonumber
\end{align*}
\begin{equation}
	\liminf_{n\to\infty}\mathbb{E}\left[\int_0^T\langle A(t,X^n(t)),X^n(t)\rangle dt\right]\geq \mathbb{E}\left[\int_0^T \langle Y(t),X(t)\rangle dt\right],\label{prop4.5-1}
\end{equation}
then $Y(\cdot)=A(\cdot,X(\cdot)),\ dt\otimes\mathbb{P}$-a.e.
\end{prop}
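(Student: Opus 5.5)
We prove Proposition \ref{prop4.5} by the classical Minty-type argument, using the pseudo-monotonicity of Proposition \ref{pseu} pointwise in $(t,\omega)$ together with a Fatou-type argument. We work on the product space $[0,T]\times\Omega$ with $dt\otimes\P$.

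The first step is to upgrade the averaged liminf condition \eqref{prop4.5-1} to pointwise information. By \hypref{H2} with $u=X^n$, $v=X$, and after subtracting the Lipschitz part of $B$ (controlled by \hypref{H5}), the quantity
\[
g_n(t):=\langle A(t,X^n(t)),X^n(t)-X(t)\rangle
\]
is bounded below by $\langle A(t,X(t)),X^n(t)-X(t)\rangle-\tfrac12[C_0+\rho(X^n)+\eta(X)]|X^n-X|_H^2$. The first term tends to zero weakly. Combining \eqref{prop4.5-1} with the weak convergence $\E\int\langle A(t,X^n),X\rangle dt\to\E\int\langle Y,X\rangle dt$ gives
\[
\liminf_n\E\int_0^T g_n\,dt\ge 0 .
\]
We also need the reverse inequality $\limsup\le0$ along a subsequence, or else a direct pointwise argument. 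The cleaner route, as in \cite{RSZ24} Lemma 2.16, is to show that $g_n^-\to0$ in $L^1$ along a subsequence. On the set where $\|X^n\|_V$ stays bounded, compactness of $V\subseteq H$ gives $X^n\to X$ in $H$ pointwise, and the lower bound then yields $\liminf_n g_n\ge0$ pointwise. Passing to a subsequence such that $\liminf_n\|X^n(t,\omega)\|_V<\infty$ a.e. (possible by Fatou, since $\E\int\|X^n\|_V^\alpha$ is bounded), we get $\liminf g_n(t,\omega)\ge0$ a.e.

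The second step applies Proposition \ref{pseu} at a.e. fixed $(t,\omega)$, along a further subsequence where $X^n(t,\omega)\rightharpoonup X(t,\omega)$ in $V$ and $g_n\to$ its liminf. This gives
\[
\limsup_n\langle A(t,X^n),X^n-v\rangle\le\langle A(t,X),X-v\rangle\quad\text{for all } v\in V.
\]
Taking $v=X$ yields $\lim g_n=0$ pointwise. With the lower bound and \eqref{prop4.5-1}, Fatou's lemma applied to $g_n$ plus an integrable minorant gives $\E\int g_n\to0$.

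The third step integrates against test elements. For $v\in L^\alpha([0,T]\times\Omega;V)$, Fatou's lemma applied to $\langle A(t,X^n),X^n-v\rangle$ (bounded above by an integrable majorant from \hypref{H3}–\hypref{H4}) gives
\[
\E\int\langle Y,X-v\rangle\,dt\le\liminf_n\E\int\langle A(t,X^n),X^n-v\rangle\,dt\le\E\int\langle A(t,X),X-v\rangle\,dt .
\]
Here the first inequality uses $\E\int g_n\to0$ and weak convergence. Choosing $v=X\pm\varepsilon w$ and letting $\varepsilon\to0$ gives $Y=A(\cdot,X)$ a.e.

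The main obstacle is the pointwise step. Weak convergence in $L^\alpha([0,T]\times\Omega;V)$ does not imply pointwise weak convergence, so the subsequence extraction must be done carefully: first reduce to points where $\|X^n\|_V$ is bounded along a subsequence, then use reflexivity there. The uniform integrability needed for Fatou must come from \hypref{H4} together with the moment bounds.
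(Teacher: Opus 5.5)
The paper does not prove this proposition; it quotes \cite{RSZ24}, Lemma 2.16. Your architecture (pointwise pseudo-monotonicity, then Fatou, then Minty with $v=X\pm w$) is the natural one, but it has two genuine gaps.

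The first gap is a sign error. \hypref{H2} yields
\[
g_n\le \langle A(t,X),X^n-X\rangle+\tfrac12\bigl[C_0+\rho(X^n)+\eta(X)\bigr]\,|X^n-X|_H^2 ,
\]
which is an upper bound, not a lower bound. Since \eqref{prop4.5-1} controls $\E\int_0^T g_n\,dt$ only from below, what you must prove pointwise is $\limsup_n g_n\le 0$. The integral step then needs uniform integrability of $g_n^+$, not a minorant. A suitable majorant comes from \hypref{H3}, \hypref{H4} and Young's inequality:
\[
g_n\le C(1+|X^n|_H^2)+C(1+|X^n|_H^{\beta})^{\alpha-1}\|X\|_V^\alpha-\tfrac c4\|X^n\|_V^\alpha .
\]
A minorant together with $\liminf_n\E\int g_n\,dt\ge0$ gives no control of $\E\int g_n^+\,dt$, so your conclusion $\E\int g_n\,dt\to0$ does not follow. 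Also, the uniform integrability needs moment bounds, such as those in Step 2, that are not among the proposition's hypotheses.

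The second gap is decisive. On the set where $\|X^n\|_V$ stays bounded, compactness of $V\subseteq H$ only gives a $(t,\omega)$-dependent subsequence converging in $H$ to \emph{some} $\xi(t,\omega)$. Weak convergence in $L^\alpha([0,T]\times\Omega;V)$ never identifies $\xi$ with $X(t,\omega)$, and subsequences chosen pointwise cannot be fed into Fatou's lemma anyway. No argument from the displayed hypotheses alone can close this, because with only those hypotheses the statement is false. Take $V=H=\R$, $\alpha=2$, $B\equiv0$ and $A(u)=-u+3\sin u$; then \hypref{H1}--\hypref{H5} hold with $\rho=\eta=0$, $C_0=9$, $c=1$, $\beta=0$. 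Let $X^n$ be deterministic, equal to $\frac12$ and to $1$ on alternating intervals of length $\frac{T}{2n}$. Then $X^n\rightharpoonup X\equiv\frac34$, $A(X^n)\rightharpoonup Y\equiv\frac12\bigl(A(\frac12)+A(1)\bigr)$, and
\[
\E\int_0^T\langle A(X^n),X^n\rangle\,dt-\E\int_0^T\langle Y,X\rangle\,dt=\tfrac T8\bigl(A(1)-A(\tfrac12)\bigr)>0 ,
\]
yet $Y<A(\frac34)$ by strict concavity of $A$ on $[\frac12,1]$.

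The missing ingredient is strong convergence $X^n\to X$ in $H$, $dt\otimes\P$-a.e.\ along one fixed subsequence. In the paper's application this comes from Lemma \ref{lem4.3}, not from compactness. With that subsequence fixed, the argument goes as follows.
\begin{itemize}
\item At a.e.\ $(t,\omega)$, the majorant above shows that any subsequence along which $g_n$ stays bounded below is bounded in $V$.
\item Its weak limit in $V$ must then be $X(t,\omega)$, and the \hypref{H2} bound gives $\limsup_n g_n\le0$.
\item Uniform integrability of $g_n^+$ then yields $g_n\to0$ in $L^1$, hence a.e.\ along a further fixed subsequence.
\item On that subsequence $X^n(t,\omega)\rightharpoonup X(t,\omega)$ in $V$, so Proposition \ref{pseu} and reverse Fatou give $\E\int_0^T\langle Y-A(t,X),X-v\rangle\,dt\le0$ for all $v$.
\item Taking $v=X\pm w$ finishes the proof.
\end{itemize}
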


To obtain $Y=A(\cdot,X)$, the key step is to prove \eqref{prop4.5-1}. Unlike the approach in \cite{LR15,RSZ24}, we cannot derive this inequality by directly comparing between $\E|X^{n}(t)|^2$ and $\E|X(t)|_H^2$ due to an additional reflection term arising  in our setting. Moreover, it currently remains unclear whether $X(t)$ is an It\^{o} process. To proceed, we instead apply It\^{o}'s formula to $|X^{n}(T)-X^m(T)|_H^2$ and use the convergence of $X^n$ in $L^2(\Omega;C([0,T];H))$. More precisely, by It\^{o}'s formula we have, after rearrangement,
$$
\begin{aligned}
&\mathbb{E}\left[\int_0^T\langle A(t,X^n(t))- A(t,X^m(t)),X^n(t)-X^m(t)\rangle dt\right]\\
&\qquad=\frac{1}{2}\mathbb{E}|X^n(T)-X^m(T)|^2_H
-\frac{1}{2}\mathbb{E}\left[\int_0^T\|B (t,X^n(t))- B(t,X^m(t))\|_{L_2}^2dt\right]\\
&\qquad\quad-\mathbb{E}\left[\int_0^T\langle (B(t,X^n(t))-B(t,X^m(t)))dW(t),X^n(t)-X^m(t)\rangle\right]\\
&\qquad\quad +n\,\mathbb{E}\left[\int_0^T\langle X^n(t)-\pi(X^n(t)),X^n(t)-X^m(t)\rangle dt\right]\\
&\qquad\quad -m\,\mathbb{E}\left[\int_0^T\langle X^m(t)-\pi(X^m(t)),X^n(t)-X^m(t)\rangle dt\right].\label{prop4.5-2}
\end{aligned}
$$
In view of (i) and (ii) in Step 2, the first three terms on the RHS converge to $0$ as $n,m\to\infty$. The third term can be written as
$$\begin{aligned}
&n\,\mathbb{E}\left[\int_0^T\langle X^n(t)-\pi(X^n(t)),X^n(t)-\pi(X^m(t))\rangle dt \right]\\
&\ + n\,\mathbb{E}\left[\int_0^T\langle X^n(t)-\pi(X^n(t)),\pi(X^m(t))-X^m(t)\rangle dt\right]
=:J_1^{m,n}+J_2^{m,n},
\end{aligned}$$
where $(\ref{pi3})$ implies $J_1^{m,n}\geq 0$, while (\ref{lem3.6-1}), \eqref{lem4.3-1} and  the Lipschitz property of $I-\pi$ implies
$$
\begin{aligned}
\lim_{n,m\to\infty}|J_2^{m,n}|\leq& C^{\frac{1}{2}}\lim_{m\to\infty}\left\{\mathbb{E} \left[\sup_{t\in[0,T]}|X^m(t)-\pi(X^m(t))|_H^2\right]\right\}^{\frac{1}{2}}=0.    
\end{aligned}$$
Similarly, we can obtain that $$\liminf_{n,m\to\infty}-m\mathbb{E}\left[\int_0^T\langle X^m(t)-\pi(X^m(t)),X^n(t)-X^m(t)\rangle dt\right]\geq 0.$$
Combining the above estimates we have 
\begin{equation}
\liminf_{m,n\to\infty}\mathbb{E}\left[\int_0^T\langle A(t,X^n(t))- A(t,X^m(t)),X^n(t)-X^m(t)\rangle dt\right] \geq 0.  
\end{equation}

$$\begin{aligned}
&\mathbb{E}\left[\int_0^T\langle A(t,X^n(t))- A(t,X^m(t)),X^n(t)-X^m(t)\rangle dt\right]\\
=&\,\mathbb{E}\left[\int_0^T\langle A(t,X^n(t)),X^n(t)\rangle dt\right] -\mathbb{E}\left[\int_0^T\langle A(t,X^n(t)),X^m(t)\rangle dt\right]\\
&\,-\mathbb{E}\left[\int_0^T\langle A(t,X^m(t)),X^n(t)\rangle dt\right]+\mathbb{E}\left[\int_0^T\langle A(t,X^m(t)),X^m(t)\rangle dt\right].
\end{aligned}$$
Recall that $A(\cdot,X^n(\cdot))\rightharpoonup Y$ in $L^{\frac{\alpha}{\alpha-1}}([0,T]\times\Omega;V^\ast)$ and  $X^n\rightharpoonup X$ in $L^{\alpha}([0,T]\times\Omega;V)$. Firt fixing $n$ and letting $m\to \infty$, then letting $n\to\infty$ and using the fact $\liminf_{n\to\infty}\liminf_{m\to\infty}a_{mn}\geq \liminf_{m.n\to\infty}a_{mn}$, we can get
\begin{equation*}
2 \liminf_{n\to\infty}\mathbb{E}\left[\int_0^T\langle A(t,X^n(t)),X^n(t)\rangle dt\right]-2\mathbb{E}\left[\int_0^T\langle Y(t),X(t)\rangle dt\right]\geq 0.
\end{equation*}
Hence \eqref{prop4.5-1} is obtained. Then by Proposition \ref{prop4.5},
$Y(\cdot)=A(\cdot,X(\cdot)),\ dt\otimes\mathbb{P} $-a.e.

\vspace{2em}
\step{3}
Define the sequence of $H$-valued adapted stochastic processes $\{L^n\}_{n\in\N}$ by
\begin{equation*}
L^n(t) = -n \int_0^t \left( X^n(s) - \pi\left( X^n(s) \right) \right) ds, \quad t \geq 0. 
\end{equation*}
According to \eqref{lem3.6-1} in Lemma \ref{lem3.6},
\begin{equation}\label{lem4.5-1}
\sup_n \mathbb{E}\left[ \operatorname{Var}_H(L^n)([0, T])^2 \right] = \sup_n \mathbb{E}\left[ \left( n \int_0^T \left| X^n(t) - \pi\left( X^n(t) \right) \right|_H dt \right)^2 \right] < \infty. 
\end{equation}

Now we define a process 
$$L(t):=X(t)-X_0-\int_0^t A(s,X(s))ds-\int_0^t B(s,X(s))dW(s),\quad t\in[0,T].$$
\begin{lem}\label{lem4.5}
The process $\{L(t),0\leq t\leq T\}$ is an $H$-valued adapted process of bounded variation and
\begin{equation}\label{lem4.5-2}
	\mathbb{E}\left[ \var_H(L)([0, T])^2 \right] < \infty.
\end{equation}
\end{lem}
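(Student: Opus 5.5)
We will show that $L$ is the limit of $L^n$ in a topology strong enough to pass the total variation bound \eqref{lem4.5-1} to the limit through lower semicontinuity. Write
\[
L^n(t)=X^n(t)-X_0-\int_0^t A(s,X^n(s))ds-\int_0^t B(s,X^n(s))dW(s).
\]
By (i) and (ii) of Step 2, the first and last terms converge to $X$ and to $\int_0^\cdot B(s,X(s))dW(s)$ in $L^2(\Omega;C([0,T];H))$, and hence in $L^1(\Omega;C([0,T];V^*))$. For the drift we use the operator $I$ from the introduction. It is bounded and linear from $L^1([0,T]\times\Omega;V^*)$ to $L^1(\Omega;C([0,T];V^*))$, so it is weakly continuous. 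By (iv) and the identification $Y=A(\cdot,X)$ in Step 2, $A(\cdot,X^n)\rightharpoonup A(\cdot,X)$ in $L^{\frac{\alpha}{\alpha-1}}([0,T]\times\Omega;V^*)$, and therefore also weakly in $L^1([0,T]\times\Omega;V^*)$. Consequently $\int_0^\cdot A(s,X^n(s))ds\rightharpoonup\int_0^\cdot A(s,X(s))ds$ weakly in $L^1(\Omega;C([0,T];V^*))$, and so $L^n\rightharpoonup L$ weakly in $L^1(\Omega;C([0,T];V^*))$. Adaptedness of $L$ follows directly from its definition.

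Next we apply Mazur's lemma. There are convex combinations $\tilde L^k=\sum_{j=k}^{N_k}\lambda^k_j L^j$ that converge strongly to $L$ in $L^1(\Omega;C([0,T];V^*))$. Passing to a subsequence, $\tilde L^k\to L$ in $C([0,T];V^*)$, $\P$-a.s. Total variation is convex, so $\var_H(\tilde L^k)([0,T])\le\sum_j\lambda^k_j\var_H(L^j)([0,T])$. Minkowski's inequality in $L^2(\Omega)$ together with \eqref{lem4.5-1} then gives $\sup_k\E[\var_H(\tilde L^k)([0,T])^2]\le C$.

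The main obstacle is transferring this bound to $L$, which requires the $H$-variation to be lower semicontinuous under $V^*$-convergence. We handle this pointwise. For a fixed partition and fixed $\omega$, we have $\tilde L^k(t_i)\to L(t_i)$ in $V^*$. We know $L(t_i)\in H$, since $X$ is $H$-valued and the stochastic integral is $H$-valued, while $\int_0^t A\,ds = X(t) - X_0 - \int B\,dW - L(t)$ is a priori only in $V^*$. The $H$-norm is lower semicontinuous with respect to $V^*$-convergence, because $|h|_H=\sup\{\langle h,v\rangle: v\in V,\ |v|_H\le1\}$ by density of $V$ in $H$ (with the norm set to $+\infty$ off $H$). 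Hence
\[
\sum_i|L(t_i)-L(t_{i-1})|_H\le\liminf_k\sum_i|\tilde L^k(t_i)-\tilde L^k(t_{i-1})|_H\le\liminf_k\var_H(\tilde L^k)([0,T]).
\]
Taking the supremum over partitions, and then applying Fatou's lemma, yields
\[
\E[\var_H(L)([0,T])^2]\le\liminf_k\E[\var_H(\tilde L^k)([0,T])^2]<\infty,
\]
which proves \eqref{lem4.5-2}. In particular $L$ has bounded variation, $\P$-a.s.

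Two points need care. Weak convergence has to be passed to a single common subsequence, and we must check that $A(\cdot,X^n)$ is bounded in $L^1$; the latter follows from the $L^{\frac{\alpha}{\alpha-1}}$ bound of Step 2. Finally, $H$-valuedness of $L$ should be stated through the lower semicontinuity representation above, so that the sums are genuinely in $H$. This is consistent because every $\tilde L^k(t)$ lies in $H$, and the finite limit bound gives $L(t)-L(s)\in H$.
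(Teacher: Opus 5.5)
Your proposal is correct and follows essentially the same route as the paper. Both arguments use weak convergence of the drift integrals via the bounded linear operator $I$, then Mazur's lemma to obtain $\tilde L^k\to L$ in $C([0,T];V^*)$ $\P$-a.s., then a uniform second-moment bound on $\var_H(\tilde L^k)([0,T])$ by convexity, and finally lower semicontinuity plus Fatou. The one difference is that you prove the lower semicontinuity of $\var_H$ on $C([0,T];V^*)$ directly from $|h|_H=\sup\{\langle h,v\rangle: v\in V,\ |v|_H\le 1\}$, which also gives $L(t)\in H$, instead of citing \cite{BZ23}. You should drop your earlier sentence asserting $L(t_i)\in H$ a priori, since that fact is a consequence of this argument, not an input to it.
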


\begin{proof}
Recall from the penalized equation \eqref{pSPDE} that 
$$L^n(t)=X^n(t)-X_0-\int_{0}^{t} A(t,X^{n}(s)) \, ds - \int_{0}^{t} B(s,X^{n}(s)) \, dW(s).$$
Since $X^n$ and $\int_0^\cdot B(s,X^{n}(s))dW(s)$ converge strongly in $L^2(\Omega;C([0,T];H))$, the convergence of $L^n$ to $L$ depends on how the term  $\int_0^\cdot A(s,X^n(s))ds$ converges.
It has been shown that $A(\cdot,X^n)$ converges weakly to $A(\cdot,X)$ in $L^{\frac{\alpha}{\alpha-1}}([0,T]\times \Omega;V^*)$, hence also weakly in $L^1([0,T]\times \Omega;V^*)$. We then consider the convergence of the corresponding integrals. 
Define the integral operator $I:L^1([0,T]\times \Omega;V^*)\to L^1(\Omega;C([0,T];V^*))$ by
$$(If)(t,\omega):=\int_0^t f(s,\omega)ds,\quad f\in L^1([0,T]\times \Omega;V^*),$$
which is a bounded linear operator. In fact, it is also bounded as a  linear operator from $L^p([0,T]\times \Omega;V^*)$ to $L^p(\Omega;C([0,T];V^*))$ for $1\leq p\leq \frac{\alpha}{\alpha-1}$. Thanks to the weak continuity of the bounded linear operator $I$, we have
\begin{equation*}
	\int_0^\cdot A(s,X^n(s))ds\rightharpoonup\int_0^\cdot A(s,X(s))ds\text{ in }L^1(\Omega;C([0,T];V^*)).
\end{equation*}
Since $X^n$ and $\int_0^\cdot B(s,X^{n}(s))dW(s)$ converge strongly in the same space, it follows that 
\begin{equation*}
	L^n\rightharpoonup L \text{ in }L^1(\Omega;C([0,T];V^*)).
\end{equation*}

By Mazur's lemma, there exists a convex combination 
$$\tilde{L}^m:=\sum_{k=m}^{N_m}\lambda_k^m L^k, \ \sum_{k=m}^{N_m}\lambda_k^m =1,\ \lambda_k^m>0,\ k=1,2,\cdots ,N_m,$$
such that
$\tilde{L}^m\to L$ in $L^1(\Omega;C([0,T];V^*))$ strongly. Hence, along a subsequence (still denoted by $m$), we have $\tilde{L}^m\to L$ in $C([0,T];V^*)$, $\P$-a.s. $\omega$. We estimate the variation of $\tilde{L}^m$. Applying the triangle inequality for the $H$-norm and Jensen’s inequality yields, for any $m\in \N$,
\begin{equation*}
	\mathbb{E}\left[\left(\var_H(\tilde{L}^m)([0,T])\right)^2\right]\leq \mathbb{E}\left[\left(\sum_{k=m}^{N_m}\lambda_k^m\var_H(L^k)([0,T])\right)^2\right] \leq \mathbb{E}\left[\sum_{k=m}^{N_m}\lambda_k^m\left(\var_H(L^k)([0,T])\right)^2\right].
\end{equation*}
The RHS is further controlled by $\sup_n \mathbb{E}\left[\left(\var_H(L^n)([0,T])\right)^2\right]$ and hence by \eqref{lem4.5-1} is uniformy bounded.
Therefore, $\{\tilde{L}^m\}_{m\in N}$ is a sequence of $H$-valued adapted processes of bounded variation and satisfies 
\begin{equation}
	\sup_m \mathbb{E}\left[\left(\var_H(\tilde{L}^m)([0,T])\right)^2\right]<\infty. \label{lem4.5-3}
\end{equation}

Note that the total variation functional
\begin{equation}
	\var_H(\cdot)([0,T]):C([0,T];V^*)\ni v\to \var_H(v)([0,T])\in [0,+\infty]\label{lem4.5-4}
\end{equation}
is lower semi-continuous (see, e.g., \cite{BZ23}). Since $\tilde{L}^m\to L$ in $C([0,T];V^*)$, $\P$-a.s., we have $L$ is an $H$-valued process of bounded variation and $\mathbb{P}$-a.s.,
\begin{equation*}
	\var_H(L)([0,T])\leq \liminf_{n\to\infty}\var_H(\tilde{L}^m)([0,T]).
\end{equation*}
Consequently, Fatou's lemma and \eqref{lem4.5-3} give that
\begin{equation*}
	\E\left[\var_H(L)([0,T])^2\right]\leq \sup_m \mathbb{E}\left[\left(\var_H(\tilde{L}^m)([0,T])\right)^2\right]<\infty.
\end{equation*}
\end{proof}

\step{4} We will show that $(X,L)$ is a solution of Definition \ref{solutiondef} to equation \eqref{SEE}. To this aim we will verify the variational inequality.
Let us choose and fix a function $\phi\in C([0,T],\overline{D})$. By \eqref{pi3} in Lemma \ref{lempi}, for every $n\in\mathbb{N}$, $$\left\langle X^{n}(t)-\phi(t),X^{n}(t)-\pi(X^{n}(t))\right\rangle\geq 0,$$ 
from which we deduce that $\P$-a.s. for every $n\in \N$,
\begin{equation}\label{uLconv-1}
\int_{0}^{T}\left(\phi(t)-X^n(t),L^n(dt)\right)=-n\int_{0}^{T}\left(\phi(t)-X^n(t),X^n(t)-\pi\left(X^n(t)\right)\right)dt\geq 0.
\end{equation}
This will imply that  
$$\int_0^T \left(\phi(t) - X(t), L(dt)\right) \geq 0,\ \P\text{-a.s.}$$  
provided we can show that $\P$-a.s. 
$$
\int_0^T \left(\phi(t) - X(t), L(dt)\right) = \lim_{m \to \infty}\sum_{k=m}^{N_m}\lambda_k^m \int_0^T (\phi(t) - X^k(t), L^k(dt)).
$$
Observe that  
$$
\begin{aligned}
&\sum_{k=m}^{N_m}\lambda_k^m \int_0^T (\phi(t) - X^k(t), L^k(dt)) - \int_0^T (\phi(t) - X(t), L(dt)) \\
&\ = \sum_{k=m}^{N_m}\lambda_k^m \left[\int_0^T (X(t) - X^k(t),L^k(dt)) + \left( \int_0^T (\phi(t) - X(t),L^k(dt)) - \int_0^T (\phi(t) - X(t), L(dt)) \right)\right] \\
&\ =: C_1^m + C_2^m, \quad m\in \mathbb{N}.
\end{aligned}
$$
In view of \eqref{lem4.3-1} and  \eqref{lem4.5-1}, we infer that 
\begin{equation*}
\E|C_1^m| \leq \sum_{k=m}^{N_m}\lambda_k^m\left\{\E\left[\sup_{t \in [0,T]} |X^k(t) - X(t)|_H^2 \right]\right\}^{1/2} \left\{\sup_k\E\left[\left(\text{Var}_H(L^k)([0,T])\right)^2\right] \right\}^{1/2}\underset{m\to\infty}{\longrightarrow}0.
\end{equation*}  
The proof of $C_2^m$ term involves the density argument analogous to that in \cite{BZ23}. We know that $\tilde{L}^m \to L$ in $C([0, T ]; V^*)$, $\P$-a.s. Let $v = \phi - X$, which belongs to  $C([0,T];H)$. By the density of $C([0,T];V)$ in $C([0,T]; H)$, for every  $\varepsilon > 0$, we can choose $ v_\varepsilon \in C([0,T], V) $  such that $ |v_\varepsilon|_{C([0,T];H)} = \sup_{t \in [0,T]} |\phi(t) - v(t)|_H < \varepsilon$. Then  $C_2^m$  can be bounded as 
\begin{equation*}
\begin{aligned}
	|C_2^m| &= \left| \int_0^T \left(v(t), \tilde{L}^m(dt)\right) - \int_0^T \left(v(t), L(dt)\right) \right|\\
	&\leq\left| \int_0^T \left(v(t) - v_\varepsilon(t),\tilde{L}^m(dt)\right)\right| + \left| \int_0^T \left(v(t) - v_\varepsilon(t), L(dt)\right) \right|
	\\
	&\quad + \left|\int_0^T \left(v_\varepsilon(t), \tilde{L}^m(dt)\right) - \int_0^T \left(v_\varepsilon(t), L(dt)\right) \right|
	.
\end{aligned}   
\end{equation*}
In view of the uniform bounds \eqref{lem4.5-3} and \eqref{lem4.5-2} on the total variation of  $\tilde{L}^m$ and $L$, the expectation of the first two terms on the RHS of  can be bounded by  $C \varepsilon^{\frac{1}{2}}$, while the expectation of the third term tends to zero as  $m\to \infty$. As $ \varepsilon$  is arbitrary, we conclude that  
$$\lim_{m \to \infty} \E|C_2^m| = 0.$$ 
Therefore, we have shown that $(X,L)$ is a solution to equation \eqref{SEE}.

\vspace{1em}
\noindent\textit{Proof of the uniqueness part of Theorem \ref{result}}

Let $(X,L)$ be the solution to the reflected SPDE (\ref{SEE}) constructed above. Let $(X',L')$ be another solution to the reflected SPDE (\ref{SEE}). Set $$\varphi(t):=\exp\bigg(-\int_0^t[C_0+\rho(X(r))+\eta(X'(r))]dr\bigg).$$
Then $\varphi$ is a continuous process of finite variation. By It\^{o}'s formula, \hypref{H2} and the fact that $X(t),X'(t)\in\overline{D}$, we have for any $t\in [0,T],$
\begin{align*}
&\varphi(t)|X(t)-X'(t)|_H^2\\
=&-\int_0^t\varphi(s)[C_0+\rho(X(s))+\eta(X'(s))]|X(s)-X'(s)|_H^2ds\\
&+2\int_0^t\varphi(s)\langle X(s)-X'(s),A(s,X(s))-A(s,X'(s))\rangle ds\\
&+2\int_0^t\varphi(s) (X(s)-X'(s),[B(s,X(s))-B(s,X'(s))]dW(s))\\
&+\int_0^t\varphi(s)\|B(s,X(s))-B(s,X'(s))\|^2_{L_2}ds\\
&+2\int_0^t\varphi(s)(X(s)-X'(s),L(ds))-2\int_0^t\varphi(s)(X(s)-X'(s),L'(ds))\\
\leq&\ 2\int_0^t\varphi(s) (X(s)-X'(s),[B(s,X(s))-B(s,X'(s))]dW(s)).
\end{align*}
Let $\{\sigma_l\}\nearrow\infty$ be a sequence of stopping times such that the local martingale in the above inequality is a martingale. Then taking the expectation on both sides of the above inequality, we get
\begin{equation*}
\mathbb{E}\Big[\varphi(t\wedge\sigma_l)|X(t\wedge\sigma_l)-X'(t\wedge\sigma_l)|^2_H\Big]=0.
\end{equation*}
Letting $l\to\infty$ and applying Fatou's lemma yield that for any $t\in[0,T],$
\begin{equation}\label{uni}
\mathbb{E}\Big[\varphi(t)|X(t)-X'(t)|^2_H\Big]=0.    
\end{equation}
By \hypref{H2} and the estimate (\ref{thn4.1-1}), we can see
\begin{equation*}
\int_0^T[C_0+\rho(X(r))+\eta(X'(r))]dr<\infty,\quad \mathbb{P}\text{-a.s.}    
\end{equation*}
According to the definition of $\varphi(t)$ and the above estimates, for any $t\in[0,T]$ we have $\varphi(t,\omega)>0$ for $\mathbb{P}$-a.s. $\omega\in \Omega.$ Hence (\ref{uni}) implies the pathwise uniqueness of solutions to equation (\ref{SEE}).

We have completed the proof of Theorem \ref{result}.
\qed

\vspace{2em}
In order to formulate our next result let us recall that $L$ is an $H$-valued process whose trajectories are of locally bounded variation. We introduce the following notation
\[
|L|(t)=\operatorname{Var}_H(L)([0,t]),\quad t\in[0,\infty).
\]
For each $\omega\in\Omega$, the function $\mathbb{R}_{+}\ni t\mapsto |L| (t)\in\mathbb{R}_{+}$ is increasing. Hence, one can associate it with a unique measure $m_{ |L| }$, called the Lebesgue-Stieltjes measure, usually denoted by $d|L|(t)$. The proof of the following result is close to that of Proposition 4.6 in \cite{BZ23}, with $L^n$ replaced by its convex combination.

\begin{prop}\label{prop4.6}
Let $(X,L)$ be the solution to equation \eqref{SEE}. Then $\mathbb{P}$-a.s. the measure $d|L|(t)$ is supported on the set
\begin{equation*}
	\left\{t\in[0,\infty):X(t)\in\partial D\right\}=\left\{t\in[0,\infty):|X(t)|_H
	=1\right\}.
\end{equation*}
\end{prop}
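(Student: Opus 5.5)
The plan is to show that, $\mathbb{P}$-a.s.,
\[
\int_0^T \bigl(1-|X(t)|_H\bigr)\,d|L|(t)=0 .
\]
Since $X(t)\in\overline{D}$ for all $t$, we have $1-|X(t)|_H\ge 0$. So this identity forces $d|L|$ to vanish on the relatively open set $\{t:|X(t)|_H<1\}$, which gives the support claim. The natural route is the one in \cite{BZ23}: first identify $L$ as pointing in the outward normal direction, $dL(t)=-X(t)\,d|L|(t)$ in a suitable sense. Then combine this with the variational inequality \eqref{def5} tested against well-chosen $\phi$.

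\textbf{Step 1: $L$ is the limit of normally-directed measures.} For the penalized problem, $L^k(dt)=-k(X^k-\pi(X^k))\,dt$. Its density is $-k\,\frac{|X^k|_H-1}{|X^k|_H}X^k\,1_{\{|X^k|_H>1\}}$, which by \eqref{pi2} is a nonpositive multiple of $X^k(t)$. Hence
\[
\int_0^T \psi(t)\bigl(X^k(t),L^k(dt)\bigr)=-\int_0^T\psi(t)|X^k(t)|_H\,d|L^k|(t)
\]
for every nonnegative continuous scalar $\psi$. On the set where $|X^k|_H>1$ we have $|X^k|_H\ge 1$, so
\[
\Bigl|\int_0^T \psi\,(X^k,L^k(dt))+\int_0^T\psi\, d|L^k|\Bigr|\le \|\psi\|_\infty \sup_t\bigl|X^k(t)-\pi(X^k(t))\bigr|_H\,\operatorname{Var}_H(L^k)([0,T]).
\]
By Lemma \ref{lem3.8} and \eqref{lem4.5-1}, this error tends to $0$ in $L^1(\Omega)$.

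\textbf{Step 2: pass to the limit through Mazur combinations.} Use the convex combinations $\tilde L^m=\sum_k\lambda^m_kL^k$ from Lemma \ref{lem4.5}, which converge to $L$ in $C([0,T];V^*)$ a.s. Arguing as for $C_1^m,C_2^m$ in Step 4, with $v=\psi X$, we get
\[
\sum_k\lambda_k^m\int_0^T\psi\,(X^k,L^k(dt))\to\int_0^T\psi\,(X,L(dt))
\]
in $L^1(\Omega)$. We also have $\sum_k\lambda^m_k\int\psi\,d|L^k|\ge\int\psi\,d|\tilde L^m|$. The delicate point is to relate $\liminf_m\int\psi\,d|\tilde L^m|$ to $\int\psi\,d|L|$. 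Lower semicontinuity of the variation, localized to subintervals, gives $\int\psi\,d|L|\le\liminf_m\int\psi\,d|\tilde L^m|$ along a further subsequence, for $\psi$ taken in a countable dense family. Combining with Step 1 yields
\[
-\int_0^T\psi\,(X,L(dt))\ge\int_0^T\psi\,d|L| .
\]
Conversely, $\bigl|(X,L(dt))\bigr|\le|X|_H\,d|L|\le d|L|$ as measures, so $-\int\psi\,(X,L(dt))\le\int\psi|X|_H\,d|L|\le\int\psi\,d|L|$. It follows that $\int\psi(1-|X|_H)\,d|L|=0$ for all such $\psi$. Taking $\psi\equiv1$ gives the claim on $[0,T]$. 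Letting $T\to\infty$ along integers, and using a countable union of null sets, gives the statement on $[0,\infty)$.

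\textbf{Main obstacle.} The step I expect to be hardest is the lower semicontinuity $\int\psi\,d|L|\le\liminf\int\psi\,d|\tilde L^m|$ when convergence holds only in $C([0,T];V^*)$. I would handle it by writing $\int\psi\,d|L|$ as a supremum over partitions of sums $\sum\psi(t_i)|L(t_{i+1})-L(t_i)|_H$. Each $H$-norm is lower semicontinuous under $V^*$-convergence, being a supremum of $\langle\cdot,v\rangle$ over $v$ in the unit ball of $V$ intersected with $H$. This is exactly the argument behind \eqref{lem4.5-4}, applied with weights $\psi$.

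If this route proves awkward, an alternative is to test \eqref{def5} directly. Take $\phi=X+\varepsilon\psi\,(\pi(X)-X)$-type perturbations, or $\phi(t)=(1-\varepsilon\psi(t))X(t)\in\overline D$; the latter gives $-\int\psi\,(X,L(dt))\ge0$. The identity $-(X,L(dt))=d|L|$ would still have to come from the penalized approximation above.
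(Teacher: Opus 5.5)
Your proof is correct. The paper gives no argument of its own here; it only says to follow \cite{BZ23} with $L^n$ replaced by the Mazur combinations $\tilde L^m$. Your proposal is a complete implementation of such a transfer, and it uses only ingredients the paper already establishes: Lemma \ref{lem3.8}, \eqref{lem4.5-1}, and the $C_1^m$/$C_2^m$ argument of Step 4.

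I have three remarks.

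\emph{You never need a general weight.} With $\psi\equiv 1$, the lower semicontinuity you need is exactly \eqref{lem4.5-4}, which the paper already uses in the proof of Lemma \ref{lem4.5}. So your announced ``main obstacle'' disappears. If you do keep a general $\psi\ge 0$, your partition sums must use $\min_{[t_i,t_{i+1}]}\psi$ rather than $\psi(t_i)$. With $\psi(t_i)$, a coarse partition can overshoot $\int\psi\,d|L|$, so $\int\psi\,d|L|$ is not the supremum of those sums.

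\emph{You get more than the support.} The identity $-\int_0^T(X(t),L(dt))=|L|([0,T])$, obtained with $\psi\equiv 1$, is stronger than the statement. Write $dL=h\,d|L|$ with $|h|_H=1$. The identity forces $(X,h)=-1$, hence $h=-X$ and $|X|_H=1$, $d|L|$-a.e. In other words, $dL=-X\,d|L|$, so you also identify the direction of $L$.

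\emph{Comparison with a local route.} A purely local transfer is also available. On any compact interval where $|X|_H\le 1-\varepsilon$, one has $|X^k|_H<1$ for all large $k$. Hence $L^k$, then $\tilde L^m$, then $L$ are constant there. This route avoids Riemann--Stieltjes limits altogether. However, it needs the Mazur combinations to be built from a subsequence along which $X^k\to X$ uniformly a.s., and it yields only the support statement. Your route needs no pathwise convergence of $X^k$ and additionally identifies the direction of $L$.
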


\section{Applications}

The results of this paper can be applied to establish the existence and uniqueness of reflected problems for many interesting stochastic nonlinear evolution equations, including the 2D Navier-Stokes equations, porous media equations, reaction-diffusion equations, fast-diffusion equations, $p$-Laplacian equations, Burgers equations, Allen-Cahn equations, 3D Leray-$\alpha$ model, 2D Boussinesq system, 2D magneto-hydrodynamic equations, 2D Boussinesq model for the B\'{e}nard convection, 2D magnetic B\'{e}nard equations, some shell models of turbulence (GOY, Sabra, dyadic), power law fluids, the Ladyzhenskaya model, the Kuramoto-Sivashinsky equations and the 3D tamed Navier-Stokes equations, see \cite{LR15}. Our results  are also applicable to some quasilinear PDEs, Cahn-Hilliard equations, liquid crystal models and Allen-Cahn-Navier-Stokes systems, see \cite{RSZ24}.

\vspace{1em}
\noindent\textbf{Example 5.1. (stochastic 3D tamed Navier-Stokes equation with reflection)}
Let $\mathbb{T}^3=[0,2\pi)^2$ be the torus in $\mathbb{R}^3$.  The stochastic 3D tamed Navier-Stokes equations are as follows:
\begin{equation}\label{eq:tamedNSE}
\begin{cases}
	d u(t) = \big[\nu \Delta u(t) - (u(t) \cdot \nabla) u(t) - g_N(|u(t)|^2) u(t) - \nabla p(t)\big] dt + \sum_{k=1}^\infty\sigma_k(t, u(t)) dW^k(t)\\
	\qquad \quad + L(t,x) , \ \text{on } (0, T] \times \mathbb{T}^3,\\
	\text{div } u = 0,\\
	u(0, x) = u_0(x),
\end{cases}
\end{equation}
where $u: [0, T] \times\mathbb{T}^3\rightarrow \mathbb{R}^3$ represents the velocity field, $p: [0, T] \times \mathbb{T}^3 \rightarrow \mathbb{R}$ is the pressure, $\nu > 0$ is the viscosity coefficient, $\{W_k,k\geq 1\}$ is a sequence of independent Brownian motions on a complete filtered probability space $(\Omega,\mathcal{F},\mathbb{F},\P)$, and $W=\{W_k\}$ can be viewed as a cylindrical Wiener process on the Hilbert space $l_2$ (space of all sequences of square summable real numbers with standard norm $\|\cdot\|_{l_2}$), the coefficient $\sigma(t,x,u): [0,T]\times \mathbb{T}^3\times \mathbb{R}^3 \to \mathbb{R}^3\times l_2$, and $g_N: \mathbb{R}_+ \rightarrow \mathbb{R}_+$ is a smooth taming function satisfying for some given $N > 0$:
\[
\begin{cases}
g_N(r) = 0, & \text{if } r \leq N, \\
g_N(r) = \dfrac{r - N}{\nu}, & \text{if } r \geq N + 1, \\
0 \leq g_N'(r) \leq 2/(\nu\wedge 1), & r \geq 0.
\end{cases}
\]
This equation was studied by M. R\"{o}ckner and X. Zhang \cite{RZ09}, also  M. R\"{o}ckner and T. Zhang \cite{RZ12}, etc. The motivation to study this equation originates from the deterministic case, where a bounded strong solution of the classical 3D Navier-Stokes equation coincides with the solution of equation \eqref{eq:tamedNSE} (with $\sigma = 0$) for large enough $N$.

We introduce $\mathbb{H}^m=\{u\in H^m(\mathbb{T}^3;\mathbb{R}^3):\text{div}u=0,\int_{\mathbb{T}^3}u=0\}$ with the usual $H^m$ norm, $m\in \N$. Note that $\mathbb{H}^0$ coincides with  $\{u\in L^2(\mathbb{T}^3;\mathbb{R}^3):\text{div} u=0,\int_{\mathbb{T}^3}u=0\}.$ Let $P$ be the Leray orthogonal projection from $L^{2}(\mathbb{T}^3;\R^{3})$ to $\mathbb{H}^{0}$ (see, e.g., \cite{RRS16} Chapter 2). It is well known that $P$ commutes with the derivative operators and is symmetric. For any $u \in \mathbb{H}^{2}$, define
\begin{equation*}
A(u) := P \Delta u-P ((u \cdot \nabla)u) - P (g_{N}(|u|^{2}) u),\quad  B_k(t, u) :=P \sigma_k(t,u).
\end{equation*}
Then $B(t,u)$ is defined by 
$$B(t,u)a=\sum_k \sigma_k(t,u)a_k, \quad a=(a_k)\in l_2.$$
Assume that $B:[0,T]\times \mathbb{H}^i\to L_2(l_2,\mathbb{H}^i)$, $i=1,2$ for the well-definedness of stochastic integral. 
With Leray projection, we shall consider the following equivalent abstract stochastic evolution equation with reflection in an infinite-dimensional ball:
\begin{equation}\label{eq:tamedRNSE}
\begin{cases}
	\mathrm{d} u(t) =A(u(t))dt +B(t, u(t)) dW(t)+dL(t),\quad t\in(0,T]\\
	u(0) = u_{0} \in \overline{D}_{\mathbb{H}^{1}}.
\end{cases}
\end{equation}
where $L(t)$ is an $\mathbb{H}^1$-valued adapted process of locally bounded variation, and denote by $\overline{D}_{\mathbb{H}^{1}}$ the closed unit ball in $\mathbb{H}^{1}$.

Set $H := \mathbb{H}^1$ with inner product $\langle u,v\rangle_{\mathbb{H}^1}=\langle u,v\rangle_{\mathbb{H}^0}+\langle \nabla u,\nabla  v\rangle_{L^2}, \text{ for }u,v\in \mathbb{H}^1,$ and $V :=\mathbb{H}^2$. Let 
$$\langle u,v\rangle_{\mathbb{H}^2,\mathbb{H}^0}:=\langle u,v\rangle_{\mathbb{H}^0}-\langle u,\Delta v\rangle_{L^2}, \quad \text{ for } u\in\mathbb{H}^0, v\in \mathbb{H}^2.$$
Then we have the Gelfand triple $\mathbb{H}^2\subseteq \mathbb{H}^1\subseteq \mathbb{H}^0$ and the embeddings are compact.  It is easy to see that condition \hypref{H1} are satisfied. We assume that the noise coefficient $\sigma_k(t, u)$ satisfies 
\begin{align}
\sum_{k=1}^{\infty} \|\sigma_k(u) - \sigma_k(v)\|_{\mathbb{H}^1}^2 &\leq c \|u - v\|_{\mathbb{H}^1}^2, \text{ for } u, v \in \mathbb{H}^1 \label{sigma1}\\
\sum_{k=1}^{\infty} \|\sigma_k(u)\|_{\mathbb{H}^1}^2 &\leq c(1+\|u\|_{\mathbb{H}^1}^2), \text{ for } u \in \mathbb{H}^1.\label{sigma2}
\end{align}
Then conditions in \hypref{H5} in Section 2 are satisfied. From the estimates (3.5)-(3.7) in \cite{RZ12} or Lemma 2.3 in \cite{RZ09}, one sees that \hypref{H3} is satisfied with $\alpha=2$. 
The $\mathbb{H}^0$ part of  \hypref{H2} follows from the estimate (3.20) in \cite{RZ12}:
\begin{equation*}
\langle A(u)-A(v),u-v\rangle_{\mathbb{H}^0}\leq-\frac{1}{2}\|u-v\|_{\mathbb{H}^1}^2+C(\|v\|_{\mathbb{H}^1}\|v\|_{\mathbb{H}^2}+1)\|u-v\|_{\mathbb{H}^0}^2, \quad \forall u, v \in \mathbb{H}^2.
\end{equation*}
We estimate the other part as follows: for $u,v\in\mathbb{H}^2$,
\begin{equation*}
-\langle A(u)-A(v),\Delta (u-v)\rangle_{L^2}=A_1+A_2+A_3.
\end{equation*}
where \begin{equation*}
\begin{aligned}
	A_1&=-\nu \langle P\Delta(u-v),\Delta(u-v)\rangle_{\mathbb{H}^0}=-\nu\|\Delta(u-v)\|_{\mathbb{H}^0}^2\lesssim -\|u-v\|_{\mathbb{H}^2}^2,\\
	A_2
	&=\langle u\cdot\nabla (u-v)+(u-v)\cdot \nabla v, \Delta(u-v)\rangle_{\mathbb{H}^0}\\
	&\leq \|u\|_{L^\infty}\|\nabla (u-v)\|_{L^2}\|\Delta(u-v)\|_{L^2}+\|u-v\|_{L^6}\|\nabla v\|_{L^3}\|\Delta(u-v)\|_{L^2}\\
	&\lesssim (\|u\|_{\mathbb{H}^2}+\| v\|_{\mathbb{H}^2})\|u-v\|_{\mathbb{H}^1}\|u-v\|_{\mathbb{H}^2}\\
	&\leq \frac{1}{4}\|u-v\|_{\mathbb{H}^2}^2+C (\|u\|_{\mathbb{H}^2}^2+\| v\|_{\mathbb{H}^2}^2)\|u-v\|_{\mathbb{H}^1}^2,
\end{aligned}
\end{equation*}
where H\"{o}lder's  inequality and Sobolev embeddings $H^2\subseteq L^\infty$, $H^1\subseteq L^6$, $H^{1/2}\subseteq L^3$ (see, e.g.,  \cite{RRS16} Theorem 1.18) and Young's inequality are used.  And $|g_N(r)|\leq r+N$, $|g_N'(r)|\leq 2$ give that
\begin{equation*}
\begin{aligned}
	A_3&=\langle g_N(|u|^2)u-g_N(|v|^2)v,\Delta(u-v)\rangle_{\mathbb{H}^0}\\
	&=\langle g_N(|u|^2)(u-v)+[g_N(|u|^2)-g_N(|v|^2)]v,\Delta(u-v)\rangle_{\mathbb{H}^0}\\
	&\leq \langle (|u|^2+N)(u-v)+2|u+v||u-v|v,\Delta(u-v)\rangle_{\mathbb{H}^0}\\
	&\leq (\|u\|_{L^6}^2+\|v\|_{L^6}^2)\|u-v\|_{L^6}\|\Delta(u-v)\|_{L^2}+N\|u-v\|_{L^2}\|\Delta(u-v)\|_{L^2}\\
	&\lesssim  (\|u\|_{\mathbb{H}^1}^2+\|v\|_{\mathbb{H}^1}^2)\|u-v\|_{\mathbb{H}^1}\|u-v\|_{\mathbb{H}^2}+N\|u-v\|_{\mathbb{H}^0}\|u-v\|_{\mathbb{H}^2}\\
	&\leq \frac{1}{4}\|u-v\|_{\mathbb{H}^2}^2+C(N+\|u\|_{\mathbb{H}^1}^4+\|v\|_{\mathbb{H}^1}^4)\|u-v\|_{\mathbb{H}^1}^2
\end{aligned}
\end{equation*}
Thus $\rho(u)$ and $\eta(v)$ are functions with growth of order 2 in $V$-norm and growth of order 4 in  $H$-norm, verifying condition \hypref{H2}. Condition \hypref{H4} is satisfied with $\alpha=2,\beta=6$:
\begin{equation*}
\begin{aligned}
	\|\nu P\Delta u\|_{\mathbb{H}^0}^2&\leq  \nu ^2\|\Delta u\|_{L^2}\leq \nu ^2\|u\|_{\mathbb{H}^2},\\
	\|P(u\cdot\nabla) u\|_{\mathbb{H}^0}^2&\leq \|(u\cdot\nabla) u\|_{L^2}^2\leq \|u\|_{L^6}^2 \|\nabla u\|_{L^3}^2\leq \|u\|_{\mathbb{H}^1}^2 \| u\|_{\mathbb{H}^2}^2,\\
	\|P(g_N(|u|^2)u\|_{\mathbb{H}^0}^2&\leq \|(|u|^2+N)u\|_{L^2}^2\leq\|u\|_{L^6}^6+ N\|u\|_{L^2}^2 \leq\| u\|_{\mathbb{H}^1}^6+ N\|u\|_{\mathbb{H}^0}^2.
\end{aligned} 
\end{equation*}
Therefore, by Theorem \ref{result}, we have established the well-posedness of solutions to the stochastic 3D tamed Navier-Stokes equations with reflection. Remark that the major difference compared with the classical stochastic Navier-Stokes equations is that in the tamed version, the taming term $g_N(|u|^2)u$ provides additional dissipation that compensates for the lack of monotonicity in the convective term.

\vspace{1em}
\noindent\textbf{Example 5.2. (Reflected quasilinear SPDE)}
Let $\mathcal{O}$ be a bounded domain in $\mathbb{R}^d$ with smooth boundary. Consider the reflected quasilinear SPDE:
\begin{equation}\label{eq:quasilinear}
\begin{cases}
	d u(t, x) = [\nabla \cdot a(t, x, u(t,x), \nabla u(t,x)) - a_0(t, x, u(t,x), \nabla u(t,x))] dt \\
	\quad \quad \quad \quad + \sum_k\sigma_k(u(t,x)) dW^k(t) + dL(t, x), \quad t \geq 0, x \in \mathcal{O}, \\
	u(0, x) = u_0(x), \quad x \in \mathcal{O}, \\
	u(t, x) = 0, \quad t > 0, x \in \partial \mathcal{O}.
\end{cases}
\end{equation}
Here, $H = L^2(\mathcal{O})$, $V = W_0^{1,\alpha}(\mathcal{O})$ for  $\alpha>1$ if $d = 1, 2$, and $\alpha\geq \frac{2d}{d+2}$ if $d\geq 3$. $W=\{W^k\}_{k=1}^\infty$ is a sequence of independent Brownian motions, and $L$ is the reflection process which is an $L^2(\mathcal{O})$-valued process of bounded variation. The functions $a_i$, $i=0,...,d$, satisfy conditions (S1)-(S3) and (S4)' as in \cite{RSZ24}, and the mapping $\sigma(\cdot)= (\sigma_k(\cdot))_{k \in \mathbb{N}} :L^2(\mathcal{O})\to L_2(l_2,L^2(\mathcal{O}))$ defined by
\begin{equation*}
\sigma(u)a := \sum_{k=1}^\infty \sigma_k(u) a_k, \quad a = (a_k)_{k \in \mathbb{N}} \in l_2, \quad u\in L^2(\mathcal{O}),
\end{equation*}
is Lipschitz and of linear growth (similar to \eqref{sigma1} and \eqref{sigma2}). The operator $A$ is defined by $\langle A(u), v \rangle = -\int_{\mathcal{O}} \left\{ \sum_{i=1}^d a_i(x, u, \nabla u) \partial_i v + a_0(x, u, \nabla u) v \right\} dx$. These conditions imply that the assumptions of Theorem \ref{result} hold, as shown in \cite{RSZ24}. Consequently the corresponding reflected SPDE admits a unique solution $(u, L)$ in the sense of Definition \ref{solutiondef}. A typical example if \eqref{eq:quasilinear} is the $p$-Laplacian for $p \geq 2$,
\begin{equation}\label{plaplace}
\partial_t u = \nabla \cdot \left( |\nabla u|^{p-2} \nabla u \right) - c |u|^{p-2} u, 
\end{equation}
where $c \geq 0$. In this case we take $\alpha = p$, and it is easy to verify that (S1)-(S4)' are satisfied.

\vspace{1em}
\noindent\textbf{Example 5.3. (Reflected Cahn-Hilliard Equation)}
Let $\mathcal{O} \subset \mathbb{R}^d$ ($d=1,2,3$) be a bounded domain with smooth boundary, $\nu$ be the outward unit normal vector on $\partial O$. The reflected stochastic Cahn-Hilliard equation is:
\begin{equation*}
\begin{cases}
	d u(t) = -\Delta^2 u(t) dt + \Delta \varphi(u(t)) dt + \sigma(u(t)) dW(t) + dL(t), \quad t \geq 0, \\
	\nabla u \cdot \nu = \nabla (\Delta u) \cdot \nu = 0 \text{ on } \partial \mathcal{O}, \\
	u(0) = u_0 \in \overline{D} \subseteq L^2(\mathcal{O}),
\end{cases}
\end{equation*}
Here, $H = L^2(\mathcal{O})$, $V = \{ u \in H^2(\mathcal{O}) : \nabla u \cdot \nu = \nabla(\Delta u) \cdot \nu = 0 \text{ on } \partial \mathcal{O} \}$, $\overline{D}$ is the closed unit ball in $H$. The embedding $V\subseteq H$ is compact. And $W$ is cylindrical Wierner process on a separable Hilbert space $U$, and $L$ is an adapted $H$-valued process of bounded variation. We assume that the nonlinear term $\varphi$ satisfies the following conditions: $\varphi \in C^1(\mathbb{R}, \mathbb{R})$ and there exist constants $C \geq 0$ and $2 \leq p \leq \frac{d+4}{d}$ such that for any $x, y \in \mathbb{R}$, $\varphi'(x) \geq -C$, $|\varphi(x)| \leq C(1 + |x|^p)$ and 
\[
|\varphi(x) - \varphi(y)| \leq C(1 + |x|^{p-1} + |y|^{p-1})|x - y|.
\]
The operator $A$ is given by $A(u) = -\Delta^2 u + \Delta \varphi(u)$. In \cite{RSZ24}, the conditions associated with $A$ are verified. If $\sigma: H \to L_2(U,H)$ is Lipschitz and of linear growth, then the associated reflected SPDE has a unique solution by Theorem \ref{result}.

\vspace{1em}
\noindent\textbf{Example 5.4. (Reflected 2D liquid crystal model)}
The reflected stochastic 2D Liquid Crystal model (simplified Ericksen-Leslie system) is:
\begin{equation*}
\begin{cases}
	\partial_t u = \Delta u - (u \cdot \nabla) u - \nabla p - \nabla \cdot (\nabla n \otimes \nabla n) + \sigma_1(u, n) dW(t) + dL_u(t), \\
	\partial_t n = \Delta n - (u \cdot \nabla) n - \Phi(n) + \sigma_2(u, n) dW(t) + dL_n(t), \\
	\nabla \cdot u = 0, \\
	u|_{\partial\mathcal{O}} = 0, \quad \frac{\partial n}{\partial \nu}|_{\partial\mathcal{O}} = 0, \\
	u(0) = u_0, \quad n(0) = n_0, \quad (u_0, n_0) \in \overline{D} \subseteq H \times H^1(\mathcal{O})^3.
\end{cases}
\end{equation*}
where $\mathcal{O}$ is a bounded domain in $\mathbb{R}^2$ with smooth boundary $\partial \mathcal{O}$, $u : [0, T] \times \mathcal{O} \to \mathbb{R}^2$ is the velocity, $p : [0, T] \times \mathcal{O} \to \mathbb{R}$ is the pressure, $n : [0, T] \times \mathcal{O} \to \mathbb{R}^3$ is the director field of liquid crystal molecules, $\nu$ is the outward unit normal vector on $\partial \mathcal{O}$. By the symbol $\nabla n \otimes \nabla n$ we mean a $2 \times 2$ matrix with entries defined by
\[
(\nabla n \otimes \nabla n)_{i,j} = \sum_{k=1}^{3} (\partial_i n_k)(\partial_j n_k),
\]
where $\partial_i$ denotes the partial derivative with respect to $x_i$ for $i = 1, 2$. We assume that $\Phi : \mathbb{R}^3 \to \mathbb{R}^3$ satisfies the following conditions: there exists a $k$-th polynomial $\varphi : [0, \infty) \to \mathbb{R}$ for some $k \in \mathbb{N}$ such that
\[
\Phi(n) = \varphi(|n|^2)n = \left( \sum_{i=0}^{k} a_i |n|^{2i} \right) n,
\]
where $a_i \in \mathbb{R}$ for $i = 0, 1, \ldots, k-1$ and $a_k > 0$.
Let $V=\{u\in H^1(\mathcal{O})^2:\nabla\cdot u=0,\,u|_{\partial\mathcal{O}}=0\}$. 

Denote by $H$ the closure of $V$ under the $L^2$-norm $\|u\|_H^2:=\int_\mathcal{O}|u(x)|^2dx$. 
Set
\[
\mathbb{H}:=H\times [H^1(\mathcal{O})^3],\quad \mathbb{V}:=V\times\Big\{n\in H^2(\mathcal{O})^3:\frac{\partial n}{\partial \nu}=0\Big\},
\]
with norms in $H$ and $V$ denoted by
\[
\|X\|_\mathbb{H}^2:=\|u\|_H^2+\|n\|_{H^1}^2,\quad \|X\|_\mathbb{V}^2:=\|u\|_V^2+\|n\|_{H^2}^2
\]
for $X=(u,n)$. Then we have the Gelfand triple $\mathbb{V}\subseteq\mathbb{H}\subseteq\mathbb{V}^*$ and the embedding $\mathbb{V}\subseteq\mathbb{H}$ is compact. Denote by $\overline{D}$ the closed unit ball in $\mathbb{H}$. The $\mathbb{H}$-valued reflection process is $L(t) = (L_u(t), L_n(t))$, and $W$ is cylindrical Wiener process on a separable Hilbert space $U$.

Note that
\[
\nabla\cdot(\nabla n\otimes\nabla n)=\frac{1}{2}\nabla(|\nabla n|^2)+\nabla n\cdot\Delta n.
\]
Let $P_H:L^2(\mathcal{O})^2\to H$ be the Helmholtz-Leray projection. Set
\[
A(X):=\begin{pmatrix} P_H[\Delta u-(u\cdot\nabla)u-\nabla n\cdot\Delta n]\\ \Delta n-(u\cdot\nabla)n-\varphi(n) \end{pmatrix}.
\]
The assumptions \hypref{H1}-\hypref{H4} regarding the operator $A$ are verified in \cite{RSZ24}. If $\sigma = (\sigma_1, \sigma_2): \mathbb{H} \to L_2(U,\mathbb{H})$ is Lipschitz and of linear growth, then the reflected system admits a unique solution $((u, n), (L_u, L_n) )$ by Theorem \ref{result}. This framework also applies to the reflected Allen-Cahn-Navier-Stokes system and reflected magneto-hydrodynamic (MHD) equations.

\vspace{1em}
\noindent\textbf{Example 5.5. (Reflected 3D Leray-$\alpha$ model with fractional dissipation)}
We consider the reflected 3D Leray-$\alpha$ model with fractional dissipation on the 3D torus $\mathbb{T}^3=[0,2\pi]^3$ with periodic boundary conditions:
\begin{equation*}
\begin{cases}
	du +[\nu(-\Delta)^{\theta_2}u+(v\cdot\nabla)u+\nabla p]dt=g(u)dW(t) + dL(t),\\
	u=v+\alpha^{2\theta_1}(-\Delta)^{\theta_1}v,\\
	\nabla\cdot u=0,\quad \nabla\cdot v=0,\\
	\int_{\mathbb{T}^3}u(x)dx=0,\quad \int_{\mathbb{T}^3}v(x)dx=0,
\end{cases}
\end{equation*}
where $W(t)$ is a cylindrical Wiener process on a separable Hilbert space $U$. In particular, in the case of $\theta_1=0$, the above model becomes the hyperviscous Navier-Stokes equations and it is well known that this system has a unique global solution for $\theta_2\geq\frac{5}{4}$ without reflection. Since we work with periodic boundary condition, we can expand the velocity in Fourier series as
\begin{equation*}
u(x)=\sum_{k\in \mathbb{Z}_0^3}\hat{u}_k e^{ik\cdot x}, \quad \text{with  } \hat{u}_k\in \mathbb{C}^3,\hat{u}_{-k}=\hat{u}_k^\ast \text{ for every } k,
\end{equation*}
where $\mathbb{Z}_0^3=\mathbb{Z}^3\backslash\{0\}$ and $\hat{u}_k^\ast$ denotes the complex conjugate of $\hat{u}_k$.
For $s\in \mathbb{R}$, we define  the divergence free Sobolev space by
\begin{equation*}
\mathbb{H}^s:=\{u:\|u\|^2_s=\sum_{k\in \mathbb{Z}_0^3}|k|^{2s}|\hat{u}_k|^2<\infty\text{ and }\hat{u}_k\cdot k=0 \text{ for every } k\},
\end{equation*}
which is a Hilbert space with scalar product
\begin{equation*}
\langle u ,v\rangle_{\mathbb{H}^s}=\sum_{k\in \mathbb{Z}_0^3}|k|^{2s}\hat{u}_k\cdot \hat{v}_{-k}.
\end{equation*}
For $\theta_1\geq 0,\theta_2>\frac{1}{2},\theta_1+\theta_2\geq\frac{5}{4}$ with the initial data in $\mathbb{H}^0$, we consider the following Gelfand triple:
\begin{equation*}
\mathbb{H}^{\theta_2}\subset\mathbb{H}^0\subset \mathbb{H}^{-\theta_2}.
\end{equation*}
If we assume that $g$ is a global Lipschitz mapping from $\mathbb{H}^0$ to $L_2(U,\mathbb{H}^0)$, then \hypref{H1}-\hypref{H5} are satisfied. For further details, we refer the reader to the proof of Theorem 3.4 in \cite{Li23}, where Hypothesis 3.1 is replaced by Lipschitz conditions. Consequently, it follows from Theorem 4.1 that he reflected system admits a unique solution $(u,L).$

\section*{Appendix} 
\renewcommand{\theequation}{A.\arabic{equation}} 

In this section, we give the proofs of Lemmas \ref{lem3.4}-\ref{lem3.8}.

\vspace{1em}
\noindent{\textit{ Proof of Lemma \ref{lem3.4}.}} 
Fix $n\in\mathbb{N}$. Let $\psi(z) = |z|_H^4$, $z\in H$. Knowing that
\[
\nabla \psi(z) = 4|z|_H^2 z \quad\text{and}\quad  D^2\psi(z) = 8z\otimes z + 4|z|_H^2 I_H, \quad z\in H,
\]
and applying Itô's formula,  we have for $t\in[0,T]$,
\begin{equation}\label{lem3.4-3}
	\begin{aligned}
		|X^n(t)|_H^4 &= |X_0|_H^4 + 2\int_0^t |X^n(s)|_H^2 \left[2\langle X^n(s),A(s, X^n(s))\rangle +\|B(s,X^n(s))\|_{L_2}^2 \right]\, ds\\
		&\quad + 4\int_0^t |X^n(s)|_H^2 \langle X^n(s), B(s,X^n(s))\,dW(s) \rangle\\
		&\quad - 4n\int_0^t |X^n(s)|_H^2 \langle X^n(s), X^n(s) - \pi(X^n(s)) \rangle \, ds\\
		&\quad + 4\int_0^t \langle X^n(s), B(s,X^n(s)) \rangle^2.
	\end{aligned}
\end{equation}

Observe that by \eqref{pi2} we have $\langle X^n(s), X^n(s) - \pi(X^n(s)) \rangle \geq 0$ for all $s\in[0,T]$. By \hypref{H3} and \hypref{H5} respectively, the second and the last terms on RHS of \eqref{lem3.4-3} are dominated by
\begin{align}
	&2\int_0^t |X^n(s)|^2_H\left[C_0(1+|X^n(s)|_H^2)-c\|X^n(s)\|_V^\alpha\right] \,ds,\nonumber\\
	&4\int_0^t|X^n(s)|_H^2 \|B(s,X^n(s)\|_{L^2}^2ds\leq C\int_0^t|X^n(s)|^2_H(1+|X^n(s)|^2_H)ds,\nonumber
\end{align}
repectively. Rearranging the terms and taking expectation of \eqref{lem3.4-3} yields that
\begin{align}\label{lem3.4-5}
	&\mathbb{E}\left[ \sup_{0\leq r\leq t} |X^n(r)|_H^4 \right] + 4n\,\mathbb{E}\left[ \int_0^t |X^n(s)|_H^2 \langle X^n(s), X^n(s) - \pi(X^n(s)) \rangle \, ds \right]\nonumber \\
	&\quad \leq |X_0|_H^4 + 4\,\mathbb{E}\left[ \sup_{0\leq r\leq t} \left| \int_0^r |X^n(s)|_H^2 \langle X^n(s), B(s,X^n(s))dW(s)  \rangle \, \right| \right]\nonumber\\
	&\quad+C\,\E \left[\int_0^t|X^n(s)|^2_H(1+|X^n(s)|^2_H)\, ds\right].
\end{align}

Let $\{e_k\}$ be an orthonormal basis of the Hilbert space $U$. Applying Burkholder's inequality, Young's inequality and using \hypref{H5} yield
\begin{align}
	&\mathbb{E}\left[ \sup_{0\leq r\leq t} \left| \int_0^r |X^n(s)|_H^2 \langle X^n(s), B(s,X^n(s))dW(s)  \rangle \right| \right]\nonumber \\
	&\quad\leq C\,\mathbb{E}\left[ \left( \int_0^t |X^n(s)|_H^4 \sum_{k=1}^\infty\langle X^n(s), B(s,X^n(s))e_k \rangle^2 \, ds \right)^{\frac{1}{2}} \right]\nonumber \\
	&\quad\leq C\,\mathbb{E}\left[ \left( \sup_{0\leq r\leq t} |X^n(r)|_H^2 \right) \left( \int_0^t \sum_{k=1}^\infty\langle X^n(s), B(s,X^n(s))e_k \rangle^2 \, ds \right)^{\frac{1}{2}} \right]\nonumber \\
	&\quad\leq \frac{1}{2}\mathbb{E}\left[ \sup_{0\leq r\leq t} |X^n(r)|_H^4 \right] + C\,\mathbb{E}\left[ \int_0^t |X^n(s)|_H^2 \|B(s,X^n(s))\|_{L^2}^2\, ds \right]\nonumber\\
	&\quad\leq \frac{1}{2}\mathbb{E}\left[ \sup_{0\leq r\leq t} |X^n(r)|_H^4 \right] + C\,\mathbb{E}\left[ \int_0^t |X^n(s)|_H^2 (1+|X^n(s))|_H^2)\, ds \right].\label{lem3.4-6}
\end{align}
Substituting \eqref{lem3.4-6} into \eqref{lem3.4-5} we obtain that
\begin{equation}\label{lem3.4-7}
	\begin{aligned}
		&\mathbb{E}\left[ \sup_{0\leq r\leq t} |X^n(r)|_H^4 \right] + 4n\mathbb{E}\left[ \int_0^t |X^n(s)|_H^2 \langle X^n(s), X^n(s) - \pi(X^n(s)) \rangle \, ds \right] \\
		&\leq C\,|X_0|_H^4 + C\,\mathbb{E}\left[ \int_0^t \left(1 + |X^n(s)|_H^4 \right) \, ds \right].
	\end{aligned}
\end{equation}
Applying Gronwall's lemma to \eqref{lem3.4-7} implies \eqref{lem3.4-1}. Finally, the combination of \eqref{lem3.4-7} and \eqref{lem3.4-1} implies \eqref{lem3.4-2}. The proof of Lemma \ref{lem3.4} is complete.\qedd

\vspace{1em}
\noindent{\textit{ Proof of Lemma \ref{lem3.6}.}} 
By It\^{o}'s formula, we have  for $t\in[0,T]$,
\[
\begin{aligned}
	\left|X^{n}(t)\right|_{H}^{2}&=\left|X_0 \right|_{H}^{2}+2\int_{0}^{t}\left(\left\langle X^{n}(s),A(s,X^{n}(s))\right\rangle+\left\|B(s,X^{n}(s)\right\|_{L^2}^{2}\right) ds\\
	&\quad+2\int_{0}^{t}\langle X^{n}(s),B(s,X^{n}(s))dW(s)\rangle \\
	&\quad-2n\int_{0}^{t}\left\langle X^{n}(s),X^{n}(s)-\pi\left(X^{n}(s)\right)\right\rangle ds.
\end{aligned}
\]
In view of \eqref{pi2},
\begin{equation*}
	\begin{aligned}
		&2n\int_{0}^{t}\left\langle X^{n}(s),X^{n}(s)-\pi\left(X^{n}(s)\right)\right\rangle ds\\
		=&\,2n\int_{0}^{t}\left|X^{n}(s)-\pi\left(X^{n}(s)\right)\right|_{H}^{2}ds+2n\int_{0}^{t}\left\langle\pi\left(X^{n}(s)\right),X^{n}(s)-\pi\left(X^{n}(s)\right)\right\rangle ds \\
		\leq &\, 2n\int_{0}^{t}\left|X^{n}(s)-\pi\left(X^{n}(s)\right)\right|_{H}^{2}ds+2n\int_{0}^{t}\left|X^{n}(s)-\pi\left(X^{n}(s)\right)\right|_{H}ds,\quad t\in[0,T].
	\end{aligned}
\end{equation*}

By Lemma \ref{lem3.4}, \hypref{H3}, \hypref{H5} and using Burkholder's inequality analogous to \eqref{lem3.4-6} we arrive at
\begin{align*}
	\sup_{n}\mathbb{E}\left[\left(n\int_{0}^{T}\left|X^{n}(s)-\pi\left(X^{n}(s)\right)\right|_{H}ds\right)^{2}\right]\leq C+C\sup_{n}\mathbb{E}\left[\sup_{t\in[0,T]}\left|X^{n}(t)\right|_{H}^{4}\right]&\leq M_{1}(T)
\end{align*}
and \eqref{lem3.6-2}, \eqref{lem3.6-3} also follow.
Hence the proof of Lemma \ref{lem3.6} is complete.\qedd

\vspace{1em}
\noindent{\textit{Proof of Lemma \ref{lem3.8}.} }
Define two functions $G,g:H\to[0,\infty)$ by $
G(y)=|y-\pi(y)|^{4}, g(y)=|y-\pi(y)|^{2}$ for $y\in H$.
Then for $y,u,v\in H$,
\begin{align*}
	\nabla G(y)(v)&=4g(y)\langle y-\pi(y),v\rangle,\nonumber\\
	D^2G(y)(u,v)&=8\langle y-\pi(y),u\rangle\langle y-\pi(y),v\rangle\nonumber\\
	&\quad+4g(y)1_{|y|_{H}>1}\left[\langle u,v\rangle\left(1-\frac{1}{|y|_{H}}\right)+\frac{1}{|y|_{H}^{3}}\langle y,u\rangle\langle y,v\rangle\right].
\end{align*}
Applying It\^{o}'s formula of $G$, with $G(X_0)=0$, we have
\begin{equation}
	\begin{aligned}
		G(X^{n}(t))&=4\int_{0}^{t}g\left(X^{n}(s)\right)\left\langle X^{n}(s)-\pi\left(X^{n}(s)\right),A(s,X^{n}(s))\right\rangle ds\\
		&\quad+4\int_{0}^{t}g\left(X^{n}(s)\right)\left\langle X^{n}(s)-\pi\left(X^{n}(s)\right),B\left(s,X^{n}(s)\right)dW(s)\right\rangle \\
		&\quad-4n\int_{0}^{t}g\left(X^{n}(s)\right)\left| X^{n}(s)-\pi\left(X^{n}(s)\right)\right|_{H}^{2}ds\\
		&\quad+4\int_{0}^{t}\left\langle X^{n}(s)-\pi\left(X^{n}(s)\right),B\left(s,X^{n}(s)\right)dW(s)\right\rangle^{2}\\
		&\quad+2\int_{0}^{t}g\left(X^{n}(s)\right)1_{\left|X^{n}(s)\right|>1}\Bigg[\left\|B\left(s,X^{n}(s)\right)\right\|_{L^2}^{2}\left(1-\frac{1}{\left|X^{n}(s)\right|_{H}}\right)ds\\
		&\quad\quad+\frac{1}{\left|X^{n}(s)\right|_{H}^{3}}\left\langle X^{n}(s),B\left(s,X^{n}(s)\right)dW(s)\right\rangle^{2}\Bigg]\\
		&:=I_{1}^{n}(t)+I_{2}^{n}(t)+I_{3}^{n}(t)+I_{4}^{n}(t)+I_{5}^{n}(t),\quad t\in[0,T].
	\end{aligned}
\end{equation}
From \hypref{H3} and \hypref{H5} respectively, we have for $t\in[0,T]$,
\begin{align*}
	I_{1}^{n}(t)&=4\int_{0}^{t}g(X^{n}(s))\lambda(|X^{n}(s)|_H)\left\langle X^{n}(s),A(s,X^{n}(s))\right\rangle\, ds\\
	&\leq \int_{0}^{t}g(X^{n}(s))\left[C_0(1+|X^{n}(s)|_H^2-c\|X^{n}(s)\|_V^\alpha\right]\,ds.\\
	I_{4}^{n}(t)&\leq 4\int_{0}^{t}\left|X^{n}-\pi(X^n(s))\right|_H^2 \|B(s,X^n(s))\|_{L^2}^2\,ds\leq C\int_{0}^{t} \left|X^{n}-\pi(X^n(s))\right|_H^2(1+|X^n(s)|_{H}^2)\,ds.
\end{align*}
Here the function $\lambda: [0,\infty) \to [0,1]$  is given by
\[
\lambda(r) =
\begin{cases}
	0, & \text{if } 0\leq r\leq 1, \\
	1 - \frac{1}{r}, & \text{if } r > 1.
\end{cases}
\]
Then $y - \pi(y) = \lambda(|y|_H)y$, for $y \in H$. For the term $I_{5}^{n}$, notice that $1-1/|X^n(s)|_H<1$ if $|X^n(s)|_H>1$. We have
\begin{align*}
	I_{5}^{n}(t)\leq 4\int_{0}^{t} g(X^{n}(s))\|B(s,X^n(s))\|_{L^2}^2 \, ds\leq C\int_0^t g(X^{n}(s))(1+|X^n(s))|_H^2) ds.
\end{align*}
Using Burkholder's inequality analogous to \eqref{lem3.4-6} we have for $t\in[0,T]$,
\begin{align*}
	\mathbb{E}\left[\sup_{0\leq s\leq t}\left|I_{2}^{n}(s)\right|\right]
	&\leq\frac{1}{2}\mathbb{E}\left[\sup_{0\leq s\leq t}G\left(X^{n}(s)\right)\right]+C\,\mathbb{E}\left[\int_{0}^{t}\left|X^{n}(s)-\pi\left(X^{n}(s)\right)\right|_{H}^{2}\|B(s,X^n(s))\|_{L^2}^2\, ds\right].
\end{align*}

Hence, from the above estimates it follows that, for $t\in[0,T]$,
\[
\mathbb{E}\left[\sup_{0\leq s\leq t}G\left(X^{n}(s)\right)\right]\leq C\,\mathbb{E}\left[\int_{0}^{t}\left(1+\left|X^{n}(s)\right|_{H}^{2}\right)\left|X^{n}(s)-\pi\left(X^{n}(s)\right)\right|_{H}^{2}ds\right].
\]
Thus, by \eqref{lem3.4-2} in Lemma \ref{lem3.4} and \eqref{lem3.6-2} in Lemma \ref{lem3.6}, we obtain that
\[
\lim_{n\rightarrow\infty}\mathbb{E}\left[\sup_{t\in[0,T]}\left|X^{n}(t)-\pi\left(X^{n}(t)\right)\right|_{H}^{4}\right]=0.
\]
This concludes the proof of Lemma \ref{lem3.8}.\qedd

\vspace{1em}
\noindent\textbf{Acknowledgement}
We thank Prof. Shijie Shang and Prof. Saisai Yang for helpful discussions.
This work is partly supported by National Key R$\&$D program of China (No. 2022 YFA1006001), and also the National Natural Science Foundation of China (NSFC) (No. 12526522, 12131019, 12371151, 12426655, 12571158).

\end{document}